\newcommand{\ds}{\displaystyle }
\newcommand{\BI}{\boldsymbol{\mathcal{I}}}
\newcommand{\BIT}{\boldsymbol{\mathcal{I}^n_{tot}}}
\newcommand{\BITp}{\boldsymbol{\mathcal{I}^{n+1}_{tot}}}
\newtheorem{proposition}{Proposition}
\numberwithin{equation}{section}
\begin{document}

\title[Hyperbolic chemotaxis on a network]{A hyperbolic model of chemotaxis on a network: a numerical study}

 \author{G. Bretti}
 \address{Istituto per le Applicazioni del Calcolo ``M. Picone" -- Consiglio Nazionale delle Ricerche, Via dei Taurini 19,  Rome, Italy ;  \email{g.bretti@iac.cnr.it  \ \& \   roberto.natalini@cnr.it}}
  
 \author{R. Natalini}
  \sameaddress{1}
 
 \author{M. Ribot}
\address{ Laboratoire J.A.Dieudonn\'e, UMR 6621 CNRS, Universit\'e de Nice Sophia Antipolis, Nice, France  ; \email{ribot@unice.fr}}
\secondaddress{Project Team COFFEE, INRIA Sophia-Antipolis, France}

\date{}

\begin{abstract}
In this paper we deal with a semilinear hyperbolic  chemotaxis model in one space dimension evolving on a network, with suitable transmission conditions at nodes. This framework is motivated by tissue-engineering scaffolds used for improving wound healing. We introduce a numerical scheme, which guarantees  global mass densities  conservation. Moreover our scheme is able to yield a correct approximation of the effects of the source term at equilibrium.   Several numerical tests are presented to show the behavior of solutions and to discuss the stability and the accuracy of our approximation.   

\end{abstract}

\subjclass{ 65M06,   35L50, 92B05, 92C17, 92C42}
\keywords{hyperbolic system on network, initial-boundary value problem, transmission conditions, asymptotic behavior, finite difference schemes, chemotaxis}

\maketitle

\section{Introduction}\label{intro}

The movement of bacteria, cells or  other microorganisms under the effect of a chemical stimulus, represented by a chemoattractant, has been widely studied in mathematics in the last two decades, see \cite{H, M, Pe}, and numerous models involving partial differential equations have been proposed. 
The basic unknowns in these chemotactic models are the density of individuals and the concentrations of some chemical attractants. One of the most considered models is the Patlak-Keller-Segel system \cite{KS}, where the evolution of the density of cells is described by a parabolic equation, and the concentration of  a chemoattractant is generally given by a parabolic or elliptic equation, depending on the different regimes to be described and on authors' choices. The behavior of this system is quite well known now: in the one-dimensional case, the solution is always global in time, while in two and more dimensions the solutions exist globally in time
or blow up according to the size of the initial data. However, a drawback of this model is that the diffusion leads to a fast dissipation or an explosive behavior,  and prevents us to observe intermediate organized structures, like aggregation patterns. 

By contrast, models based on hyperbolic/kinetic equations  for the evolution of the density of individuals, are characterized by a finite speed of propagation and have registered a growing consideration in the last few years   \cite{GambaEtAl03,Pe, FilbetLaurencotPerthame, Hillen,DolakHillen03}. In such models, the population is divided in compartments depending on the
velocity of propagation of individuals, giving raise to kinetic type equations, either with continuous or discrete velocities. 

Here we consider an hyperbolic-parabolic system which arises as a simple model for chemotaxis:
\begin{equation}\label{hyper-gen}
 \left\{\begin{array}{ll}
  u_t +v_x =0,\\ 
  v_t +\lambda^2 u_x 
   =\phi_x\,u-v,\\ 
   \phi_{t}-D\, \phi_{xx}=au-b\phi.
 \end{array}\right.
\end{equation}
 Such kind of models were originally considered in \cite{Segel77},
and later reconsidered in \cite{GreenbergAlt87}. They are based on an adaptation to the chemotactic case of the so-called hyperbolic heat or Cattaneo or telegraph equation, adding a source term accounting for the chemotactic motion  in the equation for the flux. The function $u$ is the density of cells in the considered medium, $v$ is their averaged flux and $\phi$ denotes the density of chemoattractant. The individuals move at a constant speed $\lambda$, changing their direction along the axis during the time. The positive constant $D$ is the diffusion coefficient of the chemoattractant; the positive coefficients $a$ and $b$, are respectively its production and degradation rates.  

These equations are expected to behave asymptotically as the corresponding parabolic equations, but displaying a different and richer transitory regime, and this is what is known to happen at least without the chemotactic term. 
Analytically,   these models  have been studied   in
\cite{HS, HillenRohdeLutscher01} and more recently in \cite{GMNR09}, where the analytical features were almost completely worked out, at least around constant equilibrium states, where it is proved that, at least for the Cauchy problem, the solutions of the hyperbolic and parabolic models are close for large times. 

The novelty of this paper is to consider this one dimensional model on  a network. More precisely, we consider system in the form \eqref{hyper-gen}  on each arc of the network, and so we have to consider  one set of solutions $(u,v,\phi)$ for each arc. Functions on different arcs are  coupled  using suitable transmission conditions  on each node of the network. Conservation laws or wave equations on networks  have  already been studied, for example in \cite{GaravelloPiccoli} for traffic flows or in \cite{DagerZuazua, ValeinZuazua} for flexible strings distributed along a planar graph.
However, here  we consider  different types of transmission conditions, which impose the  continuity of the fluxes rather than the continuity of the densities. Therefore, in this article,  a particular care will be given to the proper setting and the numerical approximation of the transmission conditions at  nodes, both for the hyperbolic and the parabolic parts of  \eqref{hyper-gen}. In particular, some conditions have to be imposed on the approximation of the boundary conditions, in order to ensure the conservation of the total mass of the system. Let us also mention that a first analytical study of system \eqref{hyper-gen} on a network, coupled through transmission conditions of this type, is carried out in \cite{GPhD}.

The study of this system is motivated by the tissue-engineering  research concerning the  movement of fibroblasts on  artificial scaffolds \cite{Harley08, mandal09, Spadaccio}, during the process of dermal wound healing. The natural process of healing of a damaged tissue occurs through a first phase in which fibroblasts, the  stem cells to be in charge of to the reparation of dermal tissue,  create a new extracellular matrix, essentially made by collagen, and, driven by chemotaxis, migrate to fill the wound. In recent years, tissue-engineering research has developed some new techniques, which  aim at accelerating the wound healing. Actually,  cellular migration on an injured body zone is stimulated and improved by using artificial scaffolds constituted by a network of crossed polymeric threads inserted within the wound, which mimic the extracellular matrix. The fibroblasts's reparation action is accelerated, since they already have a support and also they are constrained to move along the network, with less degrees of freedom, and it is believed that this approach could be effective in minimizing scarring \cite{Spadaccio}. Therefore, our simple model of chemotaxis on a network, which can be obtained by reducing the kinetic model of cell movement on a 3D extracellular matrix proposed in  \cite{CP} to the case of a network, see \cite{GPhD}, is a good candidate for reproducing this configuration: the arcs of the network  stand for the fibers of the scaffold and the transport equations give the evolution of the density of fibroblasts on each fiber. However, in this paper, we only address the numerical aspects of this problem. More direct applications of this framework to the real biomedical problem will be explored in  future research. As reported in \cite{Harley08}, the present understanding of the critical biochemical and biophysical parameters that affect cell motility in three-dimensional environments is quite limited. Nevertheless, it has been observed that junction interactions affect
local directional persistence as well as cell speed at and away from the junctions, so providing a new  mechanism to control cell motility by using the extracellular microstructure. Therefore, mathematical modeling and simulations could play a crucial role in providing a better understanding of these phenomena, and an optimization tool for designing improved scaffolds.

The main focus of this paper is on the construction of  an effective numerical scheme for computing the solutions to this problem, which is not an easy task, even for the case of a single arc. In that case,  non constant highly concentrated stationary solutions  are expected and schemes which are able to  capture these large gradients in an accurate way are needed. The main problem is  to balance correctly  the source term with the differential part, in order to avoid  an incorrect approximation of the density flux at equilibrium, as first observed in \cite{GMNR09}. {\em Asymptotic High Order schemes} (AHO) were introduced in \cite{NaRi}, inspired by  \cite{ABN}, to deal with this kind of inaccuracies. These schemes  are based on standard finite differences  methods, modified by a suitable treatment of the source terms, and they take into account for the behavior of the solutions near non constant stationary states. An alternative  approach, inspired by the well-balanced methods, has been proposed in \cite{Gosse10,Gosse11}, with similar results. However the methods in \cite{NaRi}  seem easier to be generalized to the present framework.

Regarding the problem considered in this paper, the main difficulty is in the discretization of the transmission conditions at  node, also enforcing global  mass conservation at the discrete level.  Therefore, in Section \ref{sec2} we explain some analytical properties of problem \eqref{hyper-gen}, with a particular emphasis on boundary and transmission conditions. Section \ref{sec3} is devoted to the numerical approximation of the problem based on a AHO  scheme with a suitable   discretization of the transmission and  boundary conditions ensuring the mass conservation.  In the present paper, we have chosen to consider only the second order version of the scheme, which is enough for our purposes, but it is easy to adapt also  the third order schemes proposed in \cite{NaRi}. 
Remark that here, unlike the single interval case,  we are forced,  for any given time step, to fix the space step on each arc using relation \eqref{cfl} introduced in Section 3, to obtain consistency on the boundary. Numerical tests (not shown) confirm the necessity of this supplementary constraint.

Finally, in Section \ref{sec4}, we report some numerical experiments, to show the behavior and the stability of our scheme. A special attention is given to the stability of the scheme near nodes and the correct behavior of the approximation for large times and near asymptotic states. It has to be mentioned that during this research we observed, in contrast with what happens for the diffusive models, the appearance of blow-up phenomena even for data of relative moderated size. Even if, up to now, there are no rigorous results, which can help to decide if these singular events are really occurring, or they are just a numerical artifact, our close investigation in Subsection 4.3 gives a strong indication towards the first alternative. 

\section{Analytical background}\label{sec2}
Let us define a network or a connected  graph $G=(\mathcal{N},\mathcal{A})$, as  composed of two finite sets,  a set of $P$ nodes (or  vertices) $\mathcal{N}$  and  a set of $N$ arcs (or edges) $\mathcal{A}$, such that an arc connects a pair of nodes. Since arcs are bidirectional the graph is non-oriented, but we need to fix an artificial orientation in order to fix a sign to the velocities. 
 The network is therefore  composed of "oriented" arcs and there are  two different types of intervals at a node  $p \in \mathcal{N}$ : incoming ones -- the set of these intervals is denoted by $I_{p}$ -- and outgoing ones  -- whose set  is denoted by $O_{p}$. For example, on the network depicted in Figure \ref{fig:4a1n}, $1,2 \in I$ and $3,4 \in O$.
 We will also denote in the following by $I_{out}$ and $O_{out}$ the set of the arcs incoming or outgoing from the outer boundaries.
 The $N$ arcs of the network are parametrized as intervals $a_i=[0,L_i]$, $i=1,\ldots,N$, and for an incoming arc, $L_{i}$ is the abscissa of the node, whereas it is $0$ for an outgoing arc.

\begin{figure}[htbp!]
\begin{center}
\includegraphics[height=5cm,width=5cm]{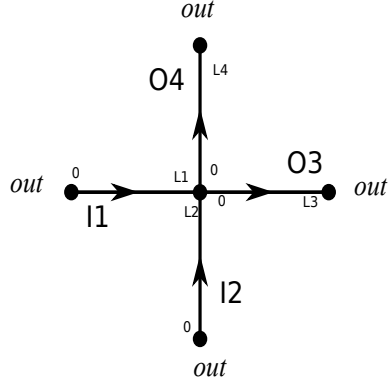}
\caption{First example of network.}
\label{fig:4a1n}
\end{center}
\end{figure}

\subsection{Evolution equations for  the problem}

We consider system (\ref{hyper-gen}) on each arc and  rewrite it in diagonal variables for its hyperbolic part by setting
\begin{equation}
\label{cvar}
  u^{\pm}=\frac{1}{2}\left(u\pm \frac{v}{\lambda}\right).
  \end{equation} 
Here $\ds u^{+}$ and  $\ds u^{-}$ are the Riemann invariants of the system and $\ds u^{+}$ (resp. $\ds u^{-}$)  denotes the density of cells following the orientation of the arc (resp. the density of cells going in the opposite direction). This transformation is inverted by $ u=u^++u^-$  and  $v=\lambda(u^+-u^-)$, and yields:
\begin{equation}
\label{hyper-gen-diag}
\left\{
\begin{aligned}
u^+_{t}+\lambda u^+_{x}&=\frac{1}{2\lambda}\left((\phi_{x}-\lambda)u^++(\phi_{x}+\lambda)u^-\right),\\
u^-_{t}-\lambda u^-_{x}&=-\frac{1}{2\lambda}\left((\phi_{x}-\lambda)u^++(\phi_{x}+\lambda)u^- \right),\\
\phi_{t}-D\phi_{xx}&= a(u^++u^-)-b\phi.
\end{aligned}
\right.
\end{equation}
 We can also denote by $T^{\pm}=\frac{1}{2\lambda}(\phi_x\mp\lambda)$ the turning rates (namely the probabilities of cells to change direction) and $a(u^++u^-)-b\phi$ represents the production and degradation  of the chemoattractant.
We assume that all the cells are moving along an arc with the same velocity $\lambda$ (in modulus), which may depend however on the characteristics of the arc. For the moment, we omitted the indexes related to the arc number since no confusion was possible. From now on, however, we need to distinguish the quantities on different arcs and we denote by $u_{i}^{\pm}$, $u_{i}$ , $v_{i}$ and $\phi_{i}$ the values of the corresponding variables on the $i$-th arc.
On the outer boundaries, we could consider general  boundary conditions:
\begin{equation}\label{boundary}
\left\{
\begin{aligned}
u_{i}^+(0,t)=\alpha_{i}(t)u_{i}^-(0,t)+\beta_{i}(t) ,&  \textrm{ if } i\in I_{out}, \\
u_{i}^-(L_{i},t)=\alpha_{i}(t)u_{i}^+(L_{i},t)+\beta_{i}(t), & \textrm{ if } i\in O_{out}.
\end{aligned}
\right.
\end{equation}
For $\alpha_{i}(t)=1$ and $\beta_{i}(t)=0$, we just recover the standard no-flux boundary condition
\begin{equation}\label{Neumann}
 u_{i}^+(.,t)=u_{i}^-(.,t) \, (\textrm{which is equivalent to } v(.,t)=0).
\end{equation}  

On the outer boundaries, we also consider no-flux (Neumann) boundary conditions for $\phi$, which read  
\begin{equation}\label{Neumann-phi}
\partial_{x}\phi_{i}(.,t)=0.
\end{equation}

The no-flux boundary conditions mean that, on the boundary, the fluxes of cells and chemoattractants  are null. This condition could be generalized, for example in the case when we assume that there is a production of fibroblasts on the boundary.

\subsection{Transmission conditions at a node}

Now, let us describe how to define the conditions at a node; this is an important point, since the behavior of the solution will be very different according to the conditions we choose. Moreover, let us recall that the coupling between the densities on the arcs are obtained through these conditions. 
At   node $p \in \mathcal{N}$, we have to give values to the components such that the corresponding characteristics are going out of the node. Therefore, we consider the following transmission conditions at node:
\begin{equation}\label{transmission}
\left\{
\begin{aligned}
u_{i}^-(L_{i},t)=\sum_{j \in I_{p}} \xi_{i,j} u_{j}^+(L_{j},t)+\sum_{j \in O_{p}} \xi_{i,j} u_{j}^-(0,t) ,&  \textrm{ if } i\in I_{p}, \\
u_{i}^+(0,t)=\sum_{j \in I_{p}} \xi_{i,j} u_{j}^+(L_{j},t)+\sum_{j \in O_{p}} \xi_{i,j} u_{j}^-(0,t), & \textrm{ if } i\in O_{p},
\end{aligned}
\right.
\end{equation}
where the constant $\xi_{i,j} \in [0,1]$ are the transmission coefficients: they represent the probability that a cell at a node decides to move from the $i-$th to the $j-$th arc of the network, also including the turnabout on the same arc. 
Let us notice that the condition differs when the arc is an incoming or an outgoing arc. Indeed, for an incoming (resp. outgoing) arc, the value of the function $u_{i}^+$ (resp. $u_{i}^-$)  at the node is obtained through the system and we need only to define $u_{i}^-$ (resp. $u_{i}^+$) at the boundary. 

 These  transmission conditions do not guarantee the continuity of the densities at node; however, we are interested in having the continuity of the fluxes at the node, meaning that we cannot loose nor gain any cells during the passage through a node. This is obtained using a condition mixing the transmission coefficients  $\xi_{i,j}$ and the velocities of the  arcs connected at node $p$.
Fixing a node and denoting the velocities of the  arcs by  $\lambda_{i}, \,  i \in I_p \cup O_{p}$, in order to have the flux conservation at  node $p$, which is given by: 
\begin{equation}\label{flux}
 \sum_{i \in I_{p} } \lambda_{i} (u_{i}^+(L_{i},t)-u_{i}^-(L_{i},t))=\sum_{i \in O_{p} } \lambda_{i} (u_{i}^+(0,t)-u_{i}^-(0,t)), 
\end{equation}
it is enough to impose the following conditions:  
\begin{equation}\label{condition_lambda}
\sum_{i \in I_p \cup O_{p}} \lambda_{i}  \xi_{i,j} =\lambda_{j},  \, j \in I_p \cup O_{p}.
\end{equation}

Notice that, condition \eqref{flux}, can be rewritten  in the $u-v$ variables as 
 \begin{equation}\label{flux2}
  \sum_{i \in I_{p} } v_{i}(L_{i},t)=\sum_{i \in O_{p} } v_{i}(0,t).
 \end{equation}
This condition  ensures that  the global mass $\mu(t)$  of the system is conserved along the time, namely:
\begin{equation}\label{masscons}
\mu(t) = \sum_{i=1}^N \int_{0}^{L_{i}} u_{i}(x,t) dx = \mu_0 := \sum_{i=1}^N \int_{0}^{L_{i}} u_{i}(x,0) dx, \, \textrm{ for all } t>0.
\end{equation}

\subsection{Dissipative transmission coefficients for the hyperbolic problem.}\label{SS-dissip-trans}
 
It is sometimes useful to restrict  our attention to the case of
positive transmission coefficients of dissipative type, in the sense that they ensure energy decay of the solutions to  the linear version of system (\ref{hyper-gen}), namely:
\begin{equation}\label{hyper-lin}
 \left\{\begin{array}{ll}
  u_t +v_x =0,\\ 
  v_t +\lambda^2 u_x 
   =-v,
 \end{array}\right.
\end{equation}
on a general network, with no-flux conditions \eqref{Neumann} on the external nodes, and transmission conditions \eqref{transmission} at the internal nodes, always assuming the flux conservation condition \eqref{condition_lambda} at nodes.
 
To obtain the decay in time of the energy, which is defined by
\begin{equation*}
E(t)=  \left(\sum_{i=1}^N \int_{0}^{L_{i}} \left(u_{i}^2(x,t)+\frac{v_{i}^2(x,t)}{\lambda_{i}^2}\right) dx \right)^{1/2},
\end{equation*} 
it is sufficient to impose some equalities on the coefficients, as proved in \cite{GPhD}.

\begin{proposition}[\cite{GPhD}]\label{prop-dissip}
The energy associated with the solutions to system \eqref{hyper-lin}, with no-flux conditions \eqref{Neumann} on the external nodes, and transmission conditions \eqref{transmission} at the internal nodes, assuming condition \eqref{condition_lambda}, is 
decreasing if the transmission coefficients $\xi_{i,j}$ belong to $[0,1]$, and at  every node $p \in \mathcal{N}$, we have:
\begin{equation}\label{dissip-coeff}
 \sum_{j  \in I_p \cup O_{p}}\xi_{i,j} = 1\textrm{ for all } i  \in I_p \cup O_{p}.
\end{equation}
\end{proposition}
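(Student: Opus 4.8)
The plan is to use the standard energy method: differentiate $E(t)^2$ in time and control, via integration by parts, the boundary terms that appear at each node. It is convenient to work with $F(t)=\tfrac12 E(t)^2$, since $E\ge 0$ is non-increasing precisely when $F$ is. Differentiating under the integral sign and substituting \eqref{hyper-lin}, namely $\partial_t u_i=-\partial_x v_i$ and $\partial_t v_i=-\lambda_i^2\partial_x u_i-v_i$, the integrand on each arc collapses to a perfect derivative plus a dissipative term,
\begin{equation*}
u_i\,\partial_t u_i+\frac{v_i}{\lambda_i^2}\,\partial_t v_i=-\partial_x(u_i v_i)-\frac{v_i^2}{\lambda_i^2}.
\end{equation*}
Integrating over $[0,L_i]$ and summing over all arcs, I would obtain
\begin{equation*}
\frac{dF}{dt}=-\sum_p B_p-\sum_{i=1}^N\int_0^{L_i}\frac{v_i^2}{\lambda_i^2}\,dx,
\end{equation*}
where the last sum is manifestly nonpositive and $B_p$ collects the boundary values $u_iv_i$ regrouped at each node $p$. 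On the outer boundaries the no-flux condition \eqref{Neumann} forces $v_i=0$, so those terms drop out, and everything reduces to proving $B_p\ge 0$ at every internal node.

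The key step is to rewrite $B_p$ in the diagonal variables. Since $u_iv_i=\lambda_i\bigl((u_i^+)^2-(u_i^-)^2\bigr)$ by \eqref{cvar}, and the node sits at $x=L_i$ for $i\in I_p$ but at $x=0$ for $i\in O_p$, a short bookkeeping of signs gives the symmetric expression
\begin{equation*}
B_p=\sum_{i\in I_p\cup O_p}\lambda_i\bigl((w_i^{\mathrm{in}})^2-(w_i^{\mathrm{out}})^2\bigr),
\end{equation*}
where $w_i^{\mathrm{in}}$ denotes the characteristic value arriving at the node (that is, $u_i^+(L_i,t)$ for $i\in I_p$ and $u_i^-(0,t)$ for $i\in O_p$) and $w_i^{\mathrm{out}}$ the value leaving it. In this notation the transmission conditions \eqref{transmission} read uniformly $w_i^{\mathrm{out}}=\sum_{j\in I_p\cup O_p}\xi_{i,j}\,w_j^{\mathrm{in}}$, for every $i\in I_p\cup O_p$.

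Then I would combine the two hypotheses on the coefficients. The dissipativity condition \eqref{dissip-coeff}, $\sum_j\xi_{i,j}=1$ together with $\xi_{i,j}\ge 0$, says that each row of $(\xi_{i,j})$ is a set of convex weights, so Jensen's inequality applied to the convex map $x\mapsto x^2$ yields $(w_i^{\mathrm{out}})^2\le\sum_j\xi_{i,j}(w_j^{\mathrm{in}})^2$. Weighting by $\lambda_i$ and exchanging the order of summation,
\begin{equation*}
\sum_{i\in I_p\cup O_p}\lambda_i(w_i^{\mathrm{out}})^2\le\sum_{j\in I_p\cup O_p}(w_j^{\mathrm{in}})^2\sum_{i\in I_p\cup O_p}\lambda_i\xi_{i,j}=\sum_{j\in I_p\cup O_p}\lambda_j(w_j^{\mathrm{in}})^2,
\end{equation*}
where the last equality is exactly the flux-conservation condition \eqref{condition_lambda}. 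Hence $B_p\ge 0$ at every node, and therefore $dF/dt\le 0$, so $E$ is non-increasing.

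The main obstacle, and the genuine content of the statement, is precisely this node estimate: neither hypothesis alone suffices. It is the interplay between the row-stochasticity of $(\xi_{i,j})$ coming from \eqref{dissip-coeff} (which supplies the convexity inequality) and the $\lambda$-weighted column identity \eqref{condition_lambda} (which licenses the swap of summations) that produces the cancellation making $B_p$ nonnegative. By comparison, the differentiation, the integration by parts, and the vanishing of the external-boundary contributions are routine, and the passage to diagonal variables is just algebraic rewriting of $u_iv_i$.
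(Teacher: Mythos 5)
Your proof is correct: the perfect-derivative identity $u_i\partial_t u_i+\lambda_i^{-2}v_i\partial_t v_i=-\partial_x(u_iv_i)-\lambda_i^{-2}v_i^2$, the sign bookkeeping that turns the node boundary terms into $\sum_{i}\lambda_i\bigl((w_i^{\mathrm{in}})^2-(w_i^{\mathrm{out}})^2\bigr)$, and the combination of Jensen's inequality (from the row-stochasticity \eqref{dissip-coeff} with $\xi_{i,j}\ge 0$) with the column identity \eqref{condition_lambda} to justify the swap of summations all check out, and the external boundary terms indeed vanish by \eqref{Neumann}. The paper itself gives no proof --- it defers to \cite{GPhD}, noting only that there the result is framed via a monotone generator and a contraction semigroup in $H^1$ --- but your direct $L^2$-energy computation is the standard argument this statement is built around and establishes exactly the claimed decay.
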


Actually, in  \cite{GPhD}, it is proved that under the assumptions of  Proposition \ref{prop-dissip},  it is possible to define a monotone generator of semigroup, and then a contraction semigroup, in the Sobolev space $H^1$, for the linear transmission problem \eqref{hyper-lin} on a network. Let us remark also that in the simplest case of a network composed by two arcs (one incoming and one outgoing, see next   Figure \ref{fig:rete}), these conditions are also necessary  in order to have the dissipation property. In such a case we have that dissipativity is given iff: 
\begin{equation}\label{dissip-coeff2}
\max\left\{0,\frac{\lambda_1-\lambda_2}{\lambda_1}\right\} \le \xi_{1,1} \le 1,\  \lambda_2 (1-\xi_{2,2}) = \lambda_1(1-\xi_{1,1}) .
\end{equation}
Using the previous relations and conditions on the coefficients $\xi_{i,j}$ given by \eqref{condition_lambda}, we obtain the values for the two missing coefficients:
\begin{equation}\label{dissip-coeff22}
\xi_{1,2} = 1 - \xi_{1,1}, \ 
 \xi_{2,1}= \frac{\lambda_1}{\lambda_2}(1 - \xi_{1,1}),
\end{equation}
so, we have only one degree of freedom. 

\subsection{Transmission conditions for $\phi$}
    
Now let us consider the transmission conditions for $\phi$ in  system \eqref{hyper-gen}. We complement conditions \eqref{boundary}, \eqref{Neumann-phi}, and \eqref{transmission} with   a transmission condition  for $\phi$. 
As previously, we do not impose the continuity of the  density of chemoattractant  $\phi$, but only  the continuity of the flux at  node $p \in \mathcal{N}$. Therefore, we use  the Kedem-Katchalsky permeability condition \cite{KK}, which has been first proposed in the case of flux through a membrane. For some positive coefficients $\kappa_{i,j}$, we impose at node
\begin{equation}
D_{i}\partial_{n}\phi_{i}=\sum_{j \in I_{p} {\cup} O_{p}} \kappa_{i,j} (\phi_{j}-\phi_{i}), \  i\in I_{p}\cup O_{p}.
\label{transmission-phi}
\end{equation}
 The condition
\begin{equation}
\kappa_{i,j}= \kappa_{j,i}, i,j=1,\ldots,N
\label{symetrie-kappa}
\end{equation}
yields the  conservation of the fluxes at  node $p$, that is to say 
\[\ds \sum_{i \in I_{p} {\cup} O_{p}} D_{i}\partial_{n}\phi_{i}=0. \]
Let us also notice that we can assume that $\ds \kappa_{i,i}=0, \, i=1,\ldots,N$, which does not change condition \eqref{transmission-phi}. Finally, notice that the positivity of the transmission coefficients  $\kappa_{i,j}$, guarantees the energy dissipation for the equation for $\phi$ in  \eqref{hyper-gen}, when the term in $u$ is absent. 

\subsection{Stationary solutions}\label{SS-staz}
First  we consider stationary solutions, which are known to drive the asymptotic behavior of the system.
Let us consider the case of stationary solutions of  system \eqref{hyper-gen}, complemented with boundary conditions \eqref{Neumann}, \eqref{Neumann-phi}, \eqref{transmission}, and \eqref{transmission-phi}. In the general case, we find on each arc the following solution~:
\begin{equation}\label{stat}
 \left\{\begin{array}{l}
  v_{i}=constant,\\ 
  u_{i}=\exp(\phi_{i}/\lambda_{i}^2)\left( C_{i}-\ds\frac{v_{i}}{\lambda_{i}^2}\int_{0}^x \exp(-\phi_{i}(y)/\lambda_{i}^2) dy \right), \\
  -D_{i}\phi_{i,xx}=a_{i}u_{i}-b_{i}\phi_{i},
 \end{array}\right.
\end{equation}
which leads to solve, on each arc, the scalar non-local equation:
\begin{equation}\label{hyper-stat-interval}
  -D_{i}\phi_{i,xx}=a_{i}\exp(\phi_{i}/\lambda_{i}^2)\left( C_{i}-\ds\frac{v_{i}}{\lambda_{i}^2}\int_{0}^x \exp(-\phi_{i}(y)/\lambda_{i}^2) dy \right)-b_{i}\phi_{i},
\end{equation}
which has  to be coupled at each node by the boundary conditions \eqref{Neumann}, \eqref{Neumann-phi}, \eqref{transmission}, and \eqref{transmission-phi}.

We can prove easily that in the case of dissipative coefficients  $\xi_{i,j}$ satisfying \eqref{condition_lambda},  \eqref{dissip-coeff} and the condition $\xi_{i,j}>0$, if  all the fluxes $v_{i}$ are null, then the density $u$ is continuous at a node, namely at a node $p$, the functions $u_{i}, \, i \in I_{p} {\cup} \,  O_{p}$  have all the same values. However, this is not the general case. 

\begin{figure}[htbp!]
\begin{center}
\includegraphics[scale=0.3]{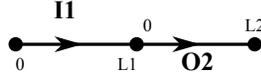}
\caption{One incoming and one outgoing arc connected at a node.}
\label{fig:rete}
\end{center}
\end{figure}
For the simplest network composed of one incoming $I=\{1\}$ and one outgoing $O=\{2\}$ arc, represented in Fig. \ref{fig:rete}, we find on each interval that
 $v_{1}=v_{2}=0$  from condition \eqref{Neumann}, and so we obtain the following local system for $\phi_{1}$ and $\phi_{2}$~:
\begin{equation}\label{hyper-stat}
 \left\{\begin{array}{l}
  -D_{1}\phi_{1,xx}=a_{1} C_{1}\exp(\phi_{1}/\lambda_{1}^2)-b_{1}\phi_{1},\\
  -D_{2}\phi_{2,xx}=a_{2} C_{2}\exp(\phi_{2}/\lambda_{2}^2)-b_{2}\phi_{2},
   \end{array}\right.
\end{equation}
with boundary conditions \eqref{Neumann-phi} and  \eqref{transmission-phi} for $\phi_{1}$ and $\phi_{2}$, which reads
\begin{equation*}
\partial_{x}\phi_{1}(L_{1})=\partial_{x}\phi_{2}(0)=\kappa_{1,2} (\phi_{2}(0)-\phi_{1}(L_{1})) , 
\end{equation*}
and
\begin{equation*}
\partial_{x}\phi_{1}(0)=\partial_{x}\phi_{2}(L_{2})=0.
\end{equation*}
We have also to take into account the following condition given by transmission condition \eqref{transmission}~:
$$\lambda_{2} \xi_{2,1} C_{1}\exp(\phi_{1}(L_{1})/\lambda_{1}^2) =\lambda_{1}\xi_{1,2} C_{2} \exp(\phi_{2}(0)/\lambda_{2}^2).$$

Solving the corresponding system  for $\phi_{1}$ and $\phi_{2}$ is a difficult task, even numerically, since an infinite number of solutions exist both for  $\phi_{1}$ and $\phi_{2}$, as in the case of a single interval \cite{GMNR09}, and it should be necessary to make them verify the above conditions at node.  In order to simplify our study, we limit  ourselves to state a result in the case of constant (in space)  stationary solutions  to system \eqref{hyper-gen}.

\begin{proposition}
Let us consider a general network $G=(\mathcal{N},\mathcal{A})$ and system  \eqref{hyper-gen} set on each arc of the network, complemented with boundary and transmission conditions \eqref{Neumann}, \eqref{Neumann-phi}, \eqref{transmission}, and \eqref{transmission-phi}.

(i) For general values of transmission coefficients $\xi_{i,j}$ satisfying \eqref{condition_lambda}, there is no non trivial constant stationary solution.

(ii) For  the special case of  transmission coefficients $\xi_{i,j}$ satisfying  the dissipation  relations \eqref{condition_lambda} and \eqref{dissip-coeff} and of  the ratios $a_{i}/b_{i}$ being  equal to the same constant on each arc,  there exists a one-parameter stationary solution, which is constant by arc.
\end{proposition}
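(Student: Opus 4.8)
The plan is to find every constant-in-space stationary solution by reading it off the three equations of \eqref{hyper-gen} and then testing it against the node conditions. First I would insert $u_i,v_i,\phi_i$ constant in both $x$ and $t$ into \eqref{hyper-gen} on each arc: the first equation is satisfied automatically; the second collapses to $v_i=0$, since every differential term vanishes and only the linear term $-v_i$ survives on the right-hand side; the third collapses to the algebraic relation $\phi_i=(a_i/b_i)\,u_i$. Hence a constant stationary state is completely encoded by a single scalar $u_i$ per arc, together with $v_i=0$ and $\phi_i=(a_i/b_i)u_i$, and it remains only to verify the boundary and transmission conditions. The external conditions \eqref{Neumann} and \eqref{Neumann-phi} hold at once, because $v_i=0$ and $\phi_i$ is spatially constant.

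Next I would exploit the chemoattractant transmission condition \eqref{transmission-phi}. Since $\phi_i$ is spatially constant the normal derivative $\partial_n\phi_i$ vanishes, so at each node $p$ the condition reduces to $\sum_{j\in I_p\cup O_p}\kappa_{i,j}(\phi_j-\phi_i)=0$ for every incident $i$. Multiplying by $\phi_i$, summing over $i$, and using the symmetry \eqref{symetrie-kappa} rewrites the sum as $-\tfrac12\sum_{i,j}\kappa_{i,j}(\phi_i-\phi_j)^2$; as the $\kappa_{i,j}$ are nonnegative, this forces $\phi_i=\phi_j$ whenever $\kappa_{i,j}>0$. Because the network is connected, this equality propagates from node to node and yields a single global constant $\phi_i\equiv\Phi$, whence $u_i=(b_i/a_i)\,\Phi$. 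This step is independent of the coefficients $\xi_{i,j}$, so it feeds into both cases (i) and (ii).

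It then remains to impose the cell transmission condition \eqref{transmission}. In the diagonal variables \eqref{cvar}, $v_i=0$ gives $u_i^+=u_i^-=u_i/2$, so both lines of \eqref{transmission} collapse to the single scalar relation $u_i=\sum_{j\in I_p\cup O_p}\xi_{i,j}\,u_j$ at each node; equivalently, the local vector $(u_j)$ must be a right eigenvector, for eigenvalue $1$, of the transmission matrix $\Xi_p=(\xi_{i,j})$. Here the two hypotheses diverge. For (i), substituting $u_j=(b_j/a_j)\Phi$ shows that a nonzero $\Phi$ would force $(b_i/a_i)=\sum_j\xi_{i,j}(b_j/a_j)$, i.e. that $(b_i/a_i)$ be a right $1$-eigenvector of $\Xi_p$; but \eqref{condition_lambda} only asserts that $(\lambda_i)$ is a \emph{left} $1$-eigenvector of $\Xi_p$, which for general coefficients does not align the right $1$-eigenspace with the fixed vector $(b_i/a_i)$, so $\Phi=0$ and the state is trivial. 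For (ii), the equal ratios $a_i/b_i\equiv\gamma$ make $(b_i/a_i)\equiv 1/\gamma$ proportional to $\one$, while the dissipation relation \eqref{dissip-coeff} says exactly that each row of $\Xi_p$ sums to $1$, i.e. $\Xi_p\one=\one$; hence $u_i\equiv C$ solves the transmission condition at every node for arbitrary $C$, and $(u_i,v_i,\phi_i)=(C,0,\gamma C)$ is the announced one-parameter family.

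The step I expect to be most delicate is making the word \emph{general} in (i) precise: once \eqref{condition_lambda} is imposed one must argue that the right $1$-eigenspace of $\Xi_p$ is generically not spanned by $(b_i/a_i)$, so that the only constant state compatible with all nodes simultaneously is $\Phi=0$. The secondary care point is the connectivity argument in the $\phi$-step, which needs the $\kappa$-weighted graph at each node to link all incident arcs so that global constancy of $\phi$ genuinely follows; the remainder is routine linear algebra on the two $1$-eigenvector conditions.
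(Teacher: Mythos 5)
Your proposal is correct, and its reduction step and part (ii) coincide with the paper's proof: both insert a spatially constant state into \eqref{hyper-gen} to get $v_{i}=0$ and $a_{i}u_{i}=b_{i}\phi_{i}$ on each arc, observe that \eqref{Neumann} and \eqref{Neumann-phi} are automatic, reduce the node conditions to \eqref{transmission-stat}--\eqref{transmission-phi-stat}, and in case (ii) exhibit the family $(U,0,\alpha U)$ using the fact that \eqref{dissip-coeff} makes the all-ones vector a right $1$-eigenvector of each node's transmission matrix. Where you genuinely diverge is part (i). The paper argues by a count of equations versus unknowns ($N$ unknowns against $4N-2N_{out}-N_{in}$ conditions after using \eqref{condition_lambda}), concluding that for generic coefficients only the null solution survives. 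You instead extract structure: multiplying the reduced $\phi$-condition by $\phi_{i}$, summing, and using the symmetry \eqref{symetrie-kappa} gives $-\tfrac12\sum_{i,j}\kappa_{i,j}(\phi_{i}-\phi_{j})^{2}=0$, which together with the positivity of the $\kappa_{i,j}$ (assumed in the paper) and connectedness forces a single global constant $\Phi$; the $u$-condition then demands that $(b_{j}/a_{j})_{j}$ be a right $1$-eigenvector of each node's matrix $\Xi_{p}$, whereas \eqref{condition_lambda} only guarantees that $(\lambda_{j})_{j}$ is a \emph{left} $1$-eigenvector, so the alignment fails for general coefficients and $\Phi=0$. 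Your route buys a sharper picture: it identifies exactly which nongeneric coefficient sets admit nontrivial constant states (those for which $(b_{j}/a_{j})$ spans the right $1$-eigenspace at every node), rather than a raw dimension count; the paper's count is cruder but requires no connectivity hypothesis on the $\kappa$-weighted incidence at each node. Both arguments leave the word ``general'' at the same level of informality, and you correctly flag the two places (genericity, and connectivity of the positive-$\kappa$ couplings) where additional care would be needed to make the statement fully rigorous.
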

\begin{proof}
Take a constant (in space) stationary solution to system \eqref{hyper-gen}. This means  that on each arc of the network, we have  three constant values $(u_{i}, v_{i}, \phi_{i})$,  which satisfy $\ds   v_{i}=0$, since $v_i=u_i \phi_{ix}=0$,  $\ds a_{i}u_{i}=b_{i}\phi_{i}$,
and boundary conditions \eqref{transmission},  \eqref{transmission-phi}, which become in that case
\begin{equation} \label{transmission-stat}
 u_{i}=\ds\sum_{j \in I_{p}  {\cup} O_{p}} \xi_{i,j} u_{j}, 
 \end{equation}
 and 
 \begin{equation}\label{transmission-phi-stat}
0=\ds \sum_{j \neq i}  \kappa_{i,j}(\phi_{j}-\phi_{i}).
\end{equation}
We remark that conditions \eqref{Neumann} and \eqref{Neumann-phi} are automatically satisfied.

(i) Denoting by $N$ the number of arcs of the network, we have to fix therefore $N$ unknowns to determine the stationary solution. Conditions \eqref{transmission-stat} -- \eqref{transmission-phi-stat} impose 4 equations by arc, unless the arc is connected to an outer node. In that case, there are only 2 conditions. To sum up, if we denote by $N_{out}$ the number of  outer nodes, we need to satisfy $4N-2N_{out}$ conditions. Taking into account relations \eqref{condition_lambda}, we obtain that equations \eqref{transmission} are linked and the system can be reduced to a system of  $4N-2N_{out}-N_{in}$ conditions, where $N_{in}$ is the number of inner nodes, which is,  generally speaking, greater than the number of unknowns. 
Therefore, unless some particular sets of coefficients $  \kappa_{i,j}$ and $\xi_{i,j}$, the only solution for previous system is the null one on each arc. 

(ii) Now, let us consider transmission coefficients $\xi_{i,j}$ satisfying relations \eqref{condition_lambda} and \eqref{dissip-coeff}.  
We also assume that there exists a constant $\alpha$ such that, for all $i$, we have $a_{i}=\alpha b_{i}$. 
In that case, we can find a stationary solution defined on each arc by $(U, 0, \alpha U)$. Such kind of solution satisfies clearly the transmission condition \eqref{transmission-phi-stat}, but satisfies also condition \eqref{transmission-stat} with relations \eqref{dissip-coeff}. 
\end{proof}
In the case (i) of the previous proposition, since the total initial mass is strictly positive and is preserved in time, we cannot  expect the system to converge  asymptotically to a stationary state which is constant on each arc and so non-constant asymptotic solutions are expected. In the case (ii), the constant state can be reached, and $U$ is determined by the total mass of the initial data.

\section{Numerical schemes}\label{sec3}

Here we introduce our numerical schemes.  We first  give some details about schemes for system \eqref{hyper-gen} on a single interval and the discretization of boundary conditions presented in \cite{NaRi}. Therefore, our main goal  will be to  generalize these schemes  to the case of a network. In the two first subsections, we will concentrate on the discretization of the hyperbolic part, whereas the discretization of the parabolic part will be treated in subsection \ref{parabolic}.


\subsection{Short review of the results from \cite{NaRi} about AHO schemes for system \eqref{hypergen2}  on a single interval }
 Let us consider a fixed single interval $[0,L]$.
We define a numerical grid using the following notations: $h$ is the space grid size, $k$ is the time grid size and $(x_{j}, t_n)=(j h, n k )$   for $j=0,\ldots, M+1, n \in\mathbb{N}$ are the grid points. 
In this subsection, we denote by $w^{n,j}$ the discretization of function $w$ on the grid at time $t_{n}$ and at point $x_{j}$ for $j=0,\ldots, M+1$ and $n\ge 0$. 
We
also use the notation $f^{n,j}$ for  $f(x_{j}, t_n)$, where $f$ is an explicitly known  function  depending on  $(x,t)$. 
Here we describe the discretization of system \eqref{hyper-gen}  
with no--flux boundary conditions $\ds v(0,t)=v(L,t)=0$, denoting by $f=\phi_x\,u$ and omitting the parabolic equation for $\phi$. 
 Since we also work with Neumann boundary conditions for the $\phi$ function, the function $f$ will satisfy the following conditions on the boundary~:
$\ds f(0,t)=f(L,t)=0$. We therefore consider the following system  
 \begin{equation}\label{hypergen2}
 \left\{\begin{array}{ll}
  u_t +v_x =0,\\ 
  v_t +\lambda^2 u_x 
   =f-v
 \end{array}\right.
\end{equation}
and rewrite it in a   diagonal  form, using the  usual change of variables  \eqref{cvar},
\begin{equation}
\label{systeme}
\left\{
\begin{aligned}
u^-_{t}-\lambda u^-_{x}&=\frac{1}{2}(u^+-u^-)-\frac{1}{2\lambda}f, \\
u^+_{t}+\lambda u^+_{x}&=\frac{1}{2}(u^--u^+)+\frac{1}{2\lambda}f.
\end{aligned}
\right.
\end{equation}
Set $\ds \omega=\left(\begin{array}{c} u^- \\ u^+ \end{array} \right)$, so that 
we can rewrite the system in vector form 
\begin{equation}
\label{systeme-vect}
\omega_{t}+\Lambda \omega_{x}=B\omega+F ,
\end{equation}
with $\ds \Lambda= \left(\begin{array}{cc}
-\lambda & 0 \\
0 & \lambda
\end{array}
\right) , \, B= \frac{1}{2}\left(\begin{array}{cc}
-1 & 1 \\
 1 & -1 
\end{array}
\right)$ and $\ds F=\frac{1}{2\lambda}\left(\begin{array}{c} -f \\ f \end{array} \right)$.
As shown in  \cite{NaRi}, to have a reliable scheme, with a correct resolution of fluxes at equilibrium, we have to deal with  Asymptotically High Order schemes   in the following  form~:
\begin{equation}
\label{scheme-aho}
\frac{\omega^{n+1,i}-\omega^{n,i}}{k}+
\frac{\Lambda}{2h}\left(\omega^{n,i+1}-\omega^{n,i-1}\right)-\frac{\lambda}{2h} (\omega^{n,i+1}-2\omega^{n,i}+\omega^{n,i-1})
=\sum_{\ell=-1,0,1} B^{\ell} \, \omega^{n,i+\ell}+ \sum_{\ell=-1,0,1} D^{\ell} \,  F^{n,i+\ell}.
\end{equation}
With the following choice of the matrices
\begin{equation}
\label{roe}
\begin{aligned}
& B^{0}= \frac{1}{4}\left(\begin{array}{cc}
-1 & 1 \\
 1 & -1 
\end{array}
\right) , \, B^{1}= \frac{1}{4}\left(\begin{array}{cc}
-1 & 1 \\
0 & 0
\end{array}
\right) , \, B^{-1}= \frac{1}{4}\left(\begin{array}{cc}
0 & 0 \\
 1 & -1
\end{array}
\right),\\
& D^{0}= \frac{1}{2}\left(\begin{array}{cc}
1 & 0 \\
 0 & 1 
\end{array}
\right) , \, D^{-1}= \frac{1}{2}\left(\begin{array}{cc}
0 & 0 \\
 0 & 1
\end{array}
\right) , \, D^{1}= \frac{1}{2}\left(\begin{array}{cc}
1& 0 \\
0 & 0
\end{array}
\right),
\end{aligned}
\end{equation}
we have a second--order  AHO scheme on every stationary solutions, which is enough to balance the flux of the system at equilibrium. This means that the scheme is second order when evaluated on stationary solutions. 
Monotonicity conditions are satisfied if $h\leq 4 \lambda$ and $\ds k \leq \frac{4h}{h+4\lambda}$, see  \cite{NaRi} for more details. Let us mention that it should be easy to consider third--order AHO schemes, but for simplicity (these schemes require a  fourth--order  AHO scheme for the parabolic equation with a five-points discretization for $\phi_{x}$), we prefer to limit our presentation to the second--order case. 

Boundary conditions for scheme  \eqref{scheme-aho} have to be treated carefully, to enforce mass-conservation.  In \cite{NaRi}, the following boundary conditions were used~: 
\begin{equation}\label{cond-bord1}
\begin{aligned}
v^{n+1,0}&=v^{n+1,M+1}
=0,
\\
u^{n+1,0}&
=\left(1-\lambda\frac{k}{h}\right)u^{n,0}
+\lambda\frac{k}{h}u^{n,1}
-k\left(\frac{1}{h}-\frac{1}{2\lambda}\right) v^{n,1}-\frac{k}{2\lambda} f^{n,1} ,
\\
u^{n+1,M+1}&
=\left(1-\lambda\frac{k}{h} \right)u^{n,M+1}
+\lambda\frac{k}{h}u^{n,M}
+k\left(\frac{1}{h}-\frac{1}{2\lambda}\right) v^{n,M} 
+ \frac{k}{2\lambda}
 f^{n,M}.
\end{aligned}
\end{equation}
These boundary conditions have been obtained by calculating the difference of the discrete mass at two successive computational  times and defining $u^{n+1,0}$ and $u^{n+1,M+1}$ as a function of the discrete quantities computed at time $t_{n}$ in order to cancel exactly this difference. Consequently, the discrete mass will be preserved in time  as the continuous mass $\ds  \int_{0}^{L} u(x,t) dx$ is conserved for system \eqref{hypergen2} with boundary conditions $\ds v(0,t)=v(L,t)=0$,  at the continuous level.
 This technique will be generalized   in this paper to the case of a network.

\subsection{The AHO scheme for system \eqref{hypergen2}  in the case of a network.}\label{AHONetwork}
 Let us consider a network as previously defined in Section \ref{sec2}. Each arc $a_{i}\in \mathcal{A}, \, 1\leq i \leq N$, is parametrized as an interval $a_{i}=[0,L_{i}]$ and is discretized with a space step $h_{i}$ and discretization  points $x^{j}_{i}$ for $j=0,\ldots, M_i+1$. We still denote by $k$ the time step, which is the same for all the arcs of the network.
In this subsection, we denote by $w^{n,j}_{i}$ the discretization on the grid at time $t_{n}$ and at point $x^{j}_{i}$  of a function $w_i, \ i=1,\ldots,N$ on the $i$-th arc for $j=0,\ldots, M_i+1$ and $n\ge 0$.

Now, we consider the AHO scheme \eqref{scheme-aho} on each interval,  and we rewrite it in the $u-v$ variables thanks to the change of variables \eqref{cvar},  in order to define the discrete boundary and transmission conditions.  We keep the possibility to use different AHO schemes on different intervals and therefore the coefficients of the scheme will be indexed by the number of the arc.
 Let  $\ds R=\left(\begin{array}{cc} 
 1 & 1 \\
 -\lambda & \lambda
\end{array}
\right) 
$ be the matrix associated to the change of variables \eqref{cvar}, namely such that $\ds \left(\begin{array}{c}
u  \\
 v \end{array}
\right)=R\left(\begin{array}{c}
  u^-\\
 u^+ \end{array}
\right)
$.
We rewrite  \eqref{scheme-aho} in the  variables $u$ and $v$  as~:
  \begin{equation}
\label{scheme-uv-interval}
\begin{aligned}
&u^{n+1,j}_{i}=u^{n,j}_{i}-\frac{k}{2h_{i}} \left(v^{n,j+1}_{i}-v^{n,j-1}_{i}\right)+ \frac{\lambda_{i}k}{2h_{i}}(u^{n,j+1}_{i}-2u^{n,j}_{i}+u^{n,j-1}_{i}) +\frac{k}{2}\Biggl(\sum_{\ell=-1,0,1} \beta^{\ell}_{u,u,i} u^{n,j+\ell}_{i}\\
&+\frac{1}{\lambda_{i}} \sum_{\ell=-1,0,1}  \beta^{\ell}_{u,v,i} v^{n,j+\ell}_{i}
 +\frac{1}{\lambda_{i}}\sum_{\ell=-1,0,1} \gamma^{\ell}_{u,i} f^{n,j+\ell}_{i}
 \Biggr),\\
&  v^{n+1,j}_{i}=v^{n,j}_{i} -\frac{\lambda^2_{i} k}{2h_{i}} \left(u^{n,j+1}_{i}-u^{n,j-1}_{i}\right)+ \frac{\lambda_{i}k}{2h_{i}}(v^{n,j+1}_{i}-2v^{n,j}_{i}+v^{n,j-1}_{i})
+\frac{k}{2}\Biggl(\lambda_{i}\sum_{\ell=-1,0,1} \beta^{\ell}_{v,u,i} u^{n,j+\ell}_{i} \\
&+\sum_{\ell=-1,0,1} \beta^{\ell}_{v,v,i} v^{n,j+\ell}_{i}+\sum_{\ell=-1,0,1} \gamma^{\ell}_{v,i} f^{n,j+\ell}_{i} \Biggr),
\end{aligned}
\end{equation}
with coefficients $ \beta^{\ell}_{u,u} , \,  \beta^{\ell}_{u,v}, \,  \beta^{\ell}_{v,u}, \,  \beta^{\ell}_{v,v} $ and $\gamma^{\ell}_{u},\,  \gamma^{\ell}_{v}$ defined by 
\begin{equation}
 \label{coeff}
RB^{\ell}R^{-1}=\frac 1  2 \left(\begin{array}{cc}
 \beta^{\ell}_{u,u} & \beta^{\ell}_{u,v}/\lambda \\
\lambda \beta^{\ell}_{v,u} & \beta^{\ell}_{v,v}
\end{array}
\right), \, RD^{\ell}R^{-1}=\frac 1  2 \left(\begin{array}{cc}
 * & \gamma^{\ell}_{u}/\lambda \\
* & \gamma^{\ell}_{v}
\end{array}
\right).
\end{equation}

Now, we   define the numerical boundary  conditions associated to this scheme.
As before for equation \eqref{cond-bord1}, we need four boundary or transmission conditions to implement this scheme on each interval. 
Considering an arc and its initial and end nodes, there are two possibilities: either they are external nodes, namely nodes from the outer boundaries linked to only one arc, or they are internal nodes connecting several arcs together. The boundary and transmission conditions will therefore depend on this feature.  Below, we will impose two boundary conditions  \eqref{bound1}--\eqref{bound3}   at outer nodes,  and two transmission conditions \eqref{bound2}--\eqref{bound4}  at inner nodes.

 The first type of boundary conditions will come from condition \eqref{Neumann} at outer nodes~:
\begin{equation}\label{bound1}
\left\{
\begin{aligned}
v_{i}^{n+1,0}=0, \textrm{ if } i\in I_{out}, \\
v_{i}^{n+1,M_{i}+1}=0, \textrm{ if } i\in O_{out},
\end{aligned}\right.
\end{equation} 
where $I_{out}$  (resp.  $O_{out}$) means that the arc is incoming from (resp. outgoing to) the outer boundary. 
The second one will come from a discretization of  the transmission condition \eqref{transmission} at  node $p$, that is to say
 \begin{equation}\label{bound2}
\left\{
\begin{aligned}
u_{-,i}^{n,M_{i}+1}=\sum_{j \in I_{p}} \xi_{i,j} u_{+,j}^{n,M_{j}+1}+\sum_{j \in O_{p}} \xi_{i,j} u_{-,j}^{n,0} ,&  \textrm{ if } i\in I_{p}, \\
u_{+,i}^{n,0}=\sum_{j \in I_{p}} \xi_{i,j} u_{+,j}^{n,M_{j}+1}+\sum_{j \in O_{p}} \xi_{i,j} u_{-,j}^{n,0} , & \textrm{ if } i\in O_{p}.
\end{aligned}
\right.
\end{equation}
However, these relations link all the unknowns together and they cannot be used alone. An effective way to compute all these quantities will be presented after equation \eqref{bound4} below.
We still have two missing conditions per arc, which can be recovered by imposing  the  exact mass conservation between two successive computational steps. The discrete total mass is given by $\ds \BIT=\sum_{i=1}^N \BI_{i}^n$,  where the mass corresponding to the arc $i$ is defined as:
\begin{equation}\label{mass-discr1}
 \BI_{i}^n=h_{i}\left(\frac{u_{i}^{n,0}}{2}+\sum_{j=1}^{M_{i}} u_{i}^{n,j}+\frac{u_{i}^{n,M_{i}+1}}{2}\right).
\end{equation}
Computing $\ds \BITp-\BIT$, we find:
\begin{equation*}
\begin{split}
&\BITp-\BIT=\sum_{i=1}^N \frac{h_{i}k}{2} \Biggl( \frac{1}{k}(u_{i}^{n+1,0}-u_{i}^{n,0})+\frac{1}{h_{i}} (v_{i}^{n,1}+v_{i}^{n,0})+ \frac{\lambda_{i}}{h_{i}}(u_{i}^{n,0} -u_{i}^{n,1})
+ \beta^{-1}_{u,u,i} u_{i}^{n,0} - \beta^{1}_{u,u,i} u_{i}^{n,1}\\
&- \frac{1}{\lambda_{i}} \beta^{1}_{u,v,i} v_{i}^{n,1}
+ \frac{1}{\lambda_{i}}\beta^{-1}_{u,v,i} v_{i}^{n,0} -\frac{1}{\lambda_{i}}\left(\gamma^{1}_{u,i}f_{i}^{n,1}- \gamma^{-1}_{u,i}f_{i}^{n,0}\right) 
\Biggr)\\
  &+\frac{h_{i}k}{2} \Biggl( \frac{1}{k}(u_{i}^{n+1,M_{i}+1}-u_i^{n,M_{i}+1})
  - \frac{1}{h_{i}} (v_i^{n,M_{i}}+v_i^{n,M_{i}+1})+ \frac{\lambda_{i}}{h_{i}}(u_i^{n,M_{i}+1}-u_i^{n,M_{i}})  +\beta^{1}_{u,u,i} u_i^{n,M_{i}+1}- \beta^{-1}_{u,u,i} u_i^{n,M_{i}}\\
  &
-\frac{1}{\lambda_{i}}\beta^{-1}_{u,v,i} v_i^{n,M_{i}}+\frac{1}{\lambda_{i}}\beta^{1}_{u,v,i} v_i^{n,M_{i}+1} + \frac{1}{\lambda_{i}}\left(\gamma^{1}_{u,i} f_{i}^{n,M_{i}+1} -\gamma^{-1}_{u,i} f_{i}^{n,M_{i}}\right)
\Biggr).
\end{split}
\end{equation*}
We  are going to impose boundary conditions such that the right-hand side in the previous difference is exactly canceled. 
On the outer boundaries we obtain the following type of boundary conditions, following equation \eqref{cond-bord1}~:
\begin{equation}
 \label{bound3}
 \left\{
\begin{aligned}
& u_{i}^{n+1,0}=\left(1-\lambda_{i}\frac{k}{h_{i}}-k\beta^{-1}_{u,u,i} \right)u_{i}^{n,0}
+k\left(\frac{\lambda_{i}}{h_{i}}+\beta^{1}_{u,u,i} \right)u_{i}^{n,1}
-k\left(\frac{1}{h_{i}}-\frac{\beta^{1}_{u,v,i}}{\lambda_{i}}\right) v_{i}^{n,1}\\
&
+\frac{k}{\lambda_{i}}\bigl(\gamma^{1}_{u,i}f_{i}^{n,1}- \gamma^{-1}_{u,i}f_{i}^{n,0}\bigr) 
,   \textrm{ if } i \in I_{out},\\
& u_{i}^{n+1,M_{i}+1}=\left(1-\lambda_{i}\frac{k}{h_{i}}-k\beta^{-1}_{u,u,i} \right)u_{i}^{n,M_{i}+1}
+k\left(\frac{\lambda_{i}}{h_{i}}+\beta^{1}_{u,u,i} \right)u_{i}^{n,M_{i}} 
+k\left(\frac{1}{h_{i}}-\frac{\beta^{-1}_{u,v,i}}{\lambda_{i}}\right) v_{i}^{n,M_{i}} \\
&
- \frac{k}{\lambda_{i}}\left(\gamma^{1}_{u,i} f_{i}^{n,M_{i}+1} -\gamma^{-1}_{u,i} f_{i}^{n,M_{i}}\right), \textrm{ if } i \in O_{out},
\end{aligned}
\right.
\end{equation}
where $I_{out}$ and $O_{out}$ have the same meaning as previously. These expressions correspond to boundary conditions \eqref{cond-bord1} in the case of a more general AHO scheme \cite{NaRi}.
Then, using the conditions (\ref{bound3}) to simplify the computation of  $\ds \BITp-\BIT$ and summing with respect to the nodes instead of the arcs, we can rewrite the remaining difference of mass in $\ds u^{\pm}$ variables as:
\begin{equation*}
\begin{split}
&\BITp-\BIT=\sum_{p\in \mathcal{N}}\sum_{i \in O_{p}} \frac{h_{i}k}{2} \Biggl( \frac{1}{k}u_{+,i}^{n+1,0}+\frac{1}{k}u_{-,i}^{n+1,0}
+u_{+,i}^{n,0}\bigl( - \frac{1}{k}+ 2\frac{\lambda_{i}}{h_{i}}+\beta^{-1}_{u,u,i}
+\beta^{-1}_{u,v,i}\bigr)
 +u_{-,i}^{n,0}\bigl( - \frac{1}{k}+\beta^{-1}_{u,u,i}\\
&
-\beta^{-1}_{u,v,i}\bigr)-u_{+,i}^{n,1}\bigl( \beta^{1}_{u,u,i}+\beta^{1}_{u,v,i}\bigr)
+u_{-,i}^{n,1}\bigl(-2\frac{\lambda_{i}}{h_{i}}-\beta^{1}_{u,u,i}+\beta^{1}_{u,v,i}\bigr)-\frac{1}{\lambda_{i}}(\gamma^{1}_{u,i}f_{i}^{n,1}
- \gamma^{-1}_{u,i}f_{i}^{n,0}) \Biggr)\\
& +\sum_{p\in \mathcal{N}}\sum_{i\in I_{p}}\frac{h_{i}k}{2} \Biggl( 
 \frac{1}{k}u_{+,i}^{n+1,M_{i}+1}+\frac{1}{k}u_{-,i}^{n+1,M_{i}+1}
+u_{+,i}^{n,M_{i}+1}\bigl( - \frac{1}{k}
+\beta^{1}_{u,u,i}+\beta^{1}_{u,v,i}\bigr)
 +u_{-,i}^{n,M_{i}+1}\bigl( - \frac{1}{k}+2\frac{\lambda_{i}}{h_{i}}+\beta^{1}_{u,u,i}\\
&-\beta^{1}_{u,v,i}\bigr)
-u_{+,i}^{n,M_{i}}(2\frac{\lambda_{i}}{h_{i}}+\beta^{-1}_{u,u,i}+\beta^{-1}_{u,v,i})+u_{-,i}^{n,M_{i}}\left(-\beta^{-1}_{u,u,i}+\beta^{-1}_{u,v,i}\right) + \frac{1}{\lambda_{i}}(\gamma^{1}_{u,i} f_{i}^{n,M_{i}+1} -\gamma^{-1}_{u,i} f_{i}^{n,M_{i}})\Biggr).
\end{split}
\end{equation*}
Therefore, using the transmission conditions \eqref{bound2} for $\ds u_{-,i}^{n+1,M_{i}+1}$ if $i\in I_{p}$ and for $\ds u_{+,i}^{n+1,0}$ if $i\in O_{p}$, we can split the equation interval by interval and obtain the following numerical boundary conditions:
\begin{equation}
 \label{bound4}
\begin{aligned}
& u_{+,i}^{n+1,M_{i}+1}=h_{i}\left(h_{i}+  \sum_{j \in I_{p}  {\cup} O_{p}} h_{j}\xi_{j,i}\right)^{-1} \times \Bigl(u_{+,i}^{n,M_{i}+1}( 1 -k\beta^{1}_{u,u,i}-k\beta^{1}_{u,v,i})
+u_{-,i}^{n,M_{i}+1}\bigl( 1-2k\frac{\lambda_{i}}{h_{i}}-k\beta^{1}_{u,u,i} \\
&+k\beta^{1}_{u,v,i}\bigr)
+k u_{+,i}^{n,M_{i}}\bigl(2\frac{\lambda_{i}}{h_{i}}+ \beta^{-1}_{u,u,i}+\beta^{-1}_{u,v,i}\bigr)
+k u_{-,i}^{n,M_{i}}(\beta^{-1}_{u,u,i} -\beta^{-1}_{u,v,i})- \frac{k}{\lambda_{i}}\left(\gamma^{1}_{u,i} f_{i}^{n,M_{i}+1} -\gamma^{-1}_{u,i} f_{i}^{n,M_{i}}\right)\Bigr), \textrm{ if } i\in I_{p},
\end{aligned}
\end{equation}
\begin{equation*}
\begin{aligned}
& u_{-,i}^{n+1,0}=h_{i}\left(h_{i}+ \sum_{j  \in I_{p}   {\cup} O_{p}}h_{j}\xi_{j,i} \right)^{-1} \times \Bigl(u_{+,i}^{n,0}( 1-2k \frac{\lambda_{i}}{h_{i}}-k\beta^{-1}_{u,u,i}
-k\beta^{-1}_{u,v,i})
+u_{-,i}^{n,0}\bigl( 1-k\beta^{-1}_{u,u,i}+k\beta^{-1}_{u,v,i}\bigr)\\
&+k u_{+,i}^{n,1}(\beta^{1}_{u,u,i}
+ \beta^{1}_{u,v,i})
+k u_{-,i}^{n,1}\bigl(2\frac{\lambda_{i}}{h_{i}}+\beta^{1}_{u,u,i}-\beta^{1}_{u,v,i}\bigr)
+\frac{k}{\lambda_{i}}(\gamma^{1}_{u,i}f_{i}^{n,1}- \gamma^{-1}_{u,i}f_{i}^{n,0}) 
\Bigr)
, \textrm{ if } i\in O_{p}.
\end{aligned}
\end{equation*}
Once these quantities are computed, we can use equations \eqref{bound2} at time $t_{n+1}$, to obtain $u_{-,i}^{n+1,M_{i}+1}$ if $i \in I_{p}$  and $u_{+,i}^{n+1,0}$ if $i \in O_{p}$.

In conclusion, we have imposed four boundary conditions \eqref{bound1},  \eqref{bound2}, \eqref{bound3}, and \eqref{bound4} on each interval. Conditions \eqref{bound1} and \eqref{bound3} deal with the outer boundary and are written in the $u-v$ variables, whereas conditions \eqref{bound2} and \eqref{bound4} deal with the node and are written in the $u^{\pm}$ variables. Under these conditions, the total numerical mass is conserved at each step. 

Now, we have to discuss the consistency of all these conditions. First, conditions \eqref{bound1},  \eqref{bound2} are imposed exactly. Besides, it has been proved in \cite{NaRi} that conditions  \eqref{bound3}, set on the outer boundary, are generally of order one and  of order two on  stationary solutions. Finally, we need to consider the consistency of the conditions  \eqref{bound4} at node.  We present here only the case $i \in O_{p}$. Expanding in Taylor series up to order one, we get:
\begin{equation*}
\begin{aligned}
& u_{-,i}^{n+1,0}-\bigl(1+ \sum_{j  \in I_{p}  {\cup} O_{p}}\frac{h_{j}}{h_{i}}\xi_{j,i} \bigr)^{-1} \times \Bigl(u_{+,i}^{n,0}( 1-2k \frac{\lambda_{i}}{h_{i}}-k\beta^{-1}_{u,u,i}
-k\beta^{-1}_{u,v,i})+u_{-,i}^{n,0}\bigl( 1-k\beta^{-1}_{u,u,i}+k\beta^{-1}_{u,v,i}\bigr)\\
&
+k u_{+,i}^{n,1}(\beta^{1}_{u,u,i}+ \beta^{1}_{u,v,i})+k u_{-,i}^{n,1}\bigl(2\frac{\lambda_{i}}{h_{i}}+\beta^{1}_{u,u,i}-\beta^{1}_{u,v,i}\bigr)
+\frac{k}{\lambda_{i}}(\gamma^{1}_{u,i}f_{i}^{n,1}- \gamma^{-1}_{u,i}f_{i}^{n,0}) \Bigr) \\
&= u_{-,i}^{n,0} \Bigl(1-\bigl(1+ \sum_{j  \in I_{p}  {\cup} O_{p}}\frac{h_{j}}{h_{i}}\xi_{j,i} \bigr)^{-1} \bigl( 1+2k\frac{\lambda_{i}}{h_{i}}\bigr)
 \Bigr) -u_{+,i}^{n,0}\bigl(1+ \sum_{j  \in I_{p} {\cup} O_{p}}\frac{h_{j}}{h_{i}}\xi_{j,i} \bigr)^{-1}\Bigl( 1-2k \frac{\lambda_{i}}{h_{i}}\Bigr)\\
 & +O(k+\sum_{i \in I_{p}  {\cup} O_{p}} h_{i}).\\
\end{aligned}
\end{equation*}
Now, to have consistency, namely to cancel the last two terms on the R.H.S., we  need to impose the following condition linking the space   and the time step on each arc :
\begin{equation}\label{cfl}
h_{i}=2 k \lambda_{i}, 
\end{equation}
 which implies, thanks to  \eqref{condition_lambda}: 
\[   \left(1+ \sum_{j  \in I_{p} {\cup} O_{p}}\frac{h_{j}}{h_{i}}\xi_{j,i} \right)^{-1}=\frac 1 2.\]
Under this condition and using equations \eqref{systeme}, expanding in Taylor series up to order three  we find: 
\begin{equation*}
\begin{aligned}
& u_{-,i}^{n+1,0}-\displaystyle \bigl(1+ \sum_{j  \in I_{p} {\cup} O_{p}}\frac{h_{j}}{h_{i}}\xi_{j,i} \bigr)^{-1} \times \Bigl(u_{+,i}^{n,0}( 1-2k \frac{\lambda_{i}}{h_{i}}-k\beta^{-1}_{u,u,i}
-k\beta^{-1}_{u,v,i})+u_{-,i}^{n,0}\bigl( 1-k\beta^{-1}_{u,u,i}+k\beta^{-1}_{u,v,i}\bigr)\\
&
+k u_{+,i}^{n,1}(\beta^{1}_{u,u,i}+ \beta^{1}_{u,v,i})+k u_{-,i}^{n,1}\bigl(2\frac{\lambda_{i}}{h_{i}}+\beta^{1}_{u,u,i}-\beta^{1}_{u,v,i}\bigr)
+\frac{k}{\lambda_{i}}(\gamma^{1}_{u,i}f_{i}^{n,1}- \gamma^{-1}_{u,i}f_{i}^{n,0}) \Bigr) \\
&= \frac k 2 u_{-,i}^{n,0} (\beta^{1}_{u,v,i}+\beta^{-1}_{u,u,i}-\beta^{1}_{u,u,i}-\beta^{-1}_{u,v,i})+k  \partial_{t} u_{-,i}^{n,0}+\frac{k^2}{2} \partial_{tt} u_{-,i}^{n,0}-k\lambda_{i}\bigl(1 + k(\beta^{1}_{u,u,i}-\beta^{1}_{u,v,i}) \bigr)  \partial_{x} u_{-,i}^{n,0}\\
 &-k^2\lambda_{i}^2  \partial_{xx} u_{-,i}^{n,0} 
 + \frac k 2 u_{+,i}^{n,0}(\beta^{-1}_{u,u,i}+\beta^{-1}_{u,v,i}-\beta^{1}_{u,u,i}- \beta^{1}_{u,v,i})
 - k^2\lambda_{i}\bigl( \beta^{1}_{u,u,i}+\beta^{1}_{u,v,i} \bigr)  \partial_{x} u_{+,i}^{n,0}\\
&-\frac{k}{2\lambda_{i}}(\gamma^{1}_{u,i}-\gamma^{-1}_{u,i})f_{i}^{n,0}-k^2\gamma^{1}_{u,i}\partial_{x} f_{i}^{n,0}+O(k^3)\\
&= \frac k 2 \Bigl( u_{-,i}^{n,0} (-1+\beta^{1}_{u,v,i}+\beta^{-1}_{u,u,i}-\beta^{1}_{u,u,i}-\beta^{-1}_{u,v,i})+ u_{+,i}^{n,0}(1+\beta^{-1}_{u,u,i}+\beta^{-1}_{u,v,i}-\beta^{1}_{u,u,i}- \beta^{1}_{u,v,i})\\
&-\frac{1}{\lambda_{i}}(1+\gamma^{1}_{u,i}-\gamma^{-1}_{u,i})f_{i}^{n,0} \Bigr)
+k^2  \Bigl( \frac 1 2 \partial_{tt} u_{-,i}^{n,0} -\lambda_{i}  \partial_{tx} u_{-,i}^{n,0}
-\lambda_{i}(\frac 1 2 +\beta^{1}_{u,u,i}-\beta^{1}_{u,v,i})  \partial_{x} u_{-,i}^{n,0} \\
&
- \lambda_{i}( -\frac 1 2 +\beta^{1}_{u,u,i}+\beta^{1}_{u,v,i} )  \partial_{x} u_{+,i}^{n,0}
-(\gamma^{1}_{u,i}+\frac 1 2 )\partial_{x} f_{i}^{n,0}
 \Bigr)
+O(k^3).
\end{aligned}
\end{equation*}
Thanks to this development we can state our general result of consistency. 
\begin{proposition}\label{consistency}
Given a general scheme in the form \eqref{scheme-aho}, the conditions  \eqref{bound4} at node are consistent only if on each arc the condition \eqref{cfl} is verified. To have the 
second order accuracy at node  the following conditions on the coefficients of the scheme have to be verified: 
\begin{equation}\label{order2bound}
\beta^{1}_{u,u,i}=\beta^{-1}_{u,u,i}, \,   \beta^{1}_{u,v,i}-\beta^{-1}_{u,v,i}=1  , \, \gamma^{-1}_{u,i}-\gamma^{1}_{u,i}=1.
\end{equation}
Moreover, to have a third order accuracy for stationary solutions, we need~:
 \begin{equation}\label{order3boundstat}
\beta^{1}_{u,u,i}=\beta^{-1}_{u,u,i}=0, \, \beta^{1}_{u,v,i}=-\beta^{-1}_{u,v,i}=\frac 1 2, \, \gamma^{1}_{u,i}=-\gamma^{-1}_{u,i}=-\frac 1 2.
\end{equation}

\end{proposition}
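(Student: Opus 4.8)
The plan is to regard the transmission formula \eqref{bound4} as an explicit definition of the new boundary value (say $u_{-,i}^{n+1,0}$ in the outgoing case $i\in O_p$; the incoming case $i\in I_p$ is entirely symmetric), to insert a smooth exact solution of \eqref{systeme} into both sides, and to measure the resulting local truncation error by a Taylor expansion in the mesh parameters $k$ and the $h_j$. The three assertions then correspond to forcing successive orders of this error to vanish: bare consistency (the $O(1)$ part), second order at the node (the $O(k)$ part), and third order on stationary states (the $O(k^2)$ part).

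\textbf{Necessity of \eqref{cfl}.} First I would expand the right-hand side of \eqref{bound4} to first order only and isolate the terms that are not premultiplied by $k$. As displayed above, these collect into coefficients of $u_{-,i}^{n,0}$ and $u_{+,i}^{n,0}$ built from the prefactor $\bigl(1+\sum_{j}(h_j/h_i)\xi_{j,i}\bigr)^{-1}$ and the ratio $2k\lambda_i/h_i$. Since the boundary traces $u_{\pm,i}^{n,0}$ are independent data, the truncation error cannot tend to zero unless both of these $O(1)$ coefficients vanish; as the prefactor is strictly positive, inspecting the coefficient of $u_{+,i}^{n,0}$ forces $2k\lambda_i/h_i=1$, i.e. \eqref{cfl}. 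The essential point is that \eqref{cfl} together with the flux-conservation relation \eqref{condition_lambda}, used in the form $\sum_{j\in I_p\cup O_p}\lambda_j\xi_{j,i}=\lambda_i$, makes the prefactor equal exactly $\tfrac12$, which is precisely what annihilates the remaining $O(1)$ term.

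\textbf{Order conditions.} Once \eqref{cfl} holds, I would use the diagonal system \eqref{systeme} to trade every time derivative appearing in the expansion for space derivatives and the source $f$, and then push the Taylor expansion to order three, obtaining a truncation error of the shape $\tfrac{k}{2}(\cdots)+k^2(\cdots)+O(k^3)$. For \emph{second order} at the node the $O(k)$ bracket must vanish for an arbitrary solution; since $u_{-,i}^{n,0}$, $u_{+,i}^{n,0}$ and $f_i^{n,0}$ are independent, their three coefficients must each be zero, and solving these three relations yields \eqref{order2bound} (the two $\beta$-relations follow by adding and subtracting the coefficients of $u_\pm$, the $\gamma$-relation directly from the coefficient of $f$). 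For \emph{third order on stationary solutions} I would additionally annihilate the $O(k^2)$ bracket: on a steady state all time derivatives drop, leaving only the coefficients of $\partial_x u_{-,i}^{n,0}$, $\partial_x u_{+,i}^{n,0}$ and $\partial_x f_i^{n,0}$; requiring each to vanish and combining with \eqref{order2bound} pins down all six coefficients uniquely as in \eqref{order3boundstat}.

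\textbf{Main obstacle.} The genuinely delicate part is not any single deduction but the bookkeeping of the expansion: one must carry all of $\beta^{\pm1}_{u,u,i}$, $\beta^{\pm1}_{u,v,i}$, $\gamma^{\pm1}_{u,i}$ and use \eqref{systeme} consistently to eliminate $\partial_t$, $\partial_{tt}$ and $\partial_{tx}$ before reading off coefficients. The conceptually subtle observation is the interplay between the mesh constraint \eqref{cfl} and the structural relation \eqref{condition_lambda}: it is only their combination that collapses the node prefactor to $\tfrac12$, so that consistency at an interior node is inseparable from the flux-conservation assumption imposed on the transmission coefficients.
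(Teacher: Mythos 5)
Your proposal is correct and follows essentially the same route as the paper: the paper's proof is precisely the displayed Taylor expansion of \eqref{bound4} preceding the proposition, first isolating the $O(1)$ coefficients of $u_{\pm,i}^{n,0}$ to force \eqref{cfl} (with \eqref{condition_lambda} collapsing the node prefactor to $\tfrac12$), then using \eqref{systeme} to rewrite the expansion and reading off the $O(k)$ and, on stationary states, the $O(k^2)$ coefficients of $u_{-}$, $u_{+}$, $f$ and their spatial derivatives to obtain \eqref{order2bound} and \eqref{order3boundstat}. No substantive difference from the paper's argument.
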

Notice that, all these conditions are satisfied for the Roe scheme defined by   \eqref{roe}.

\subsection{Discretization of the parabolic equation for $\phi$ in system \eqref{hyper-gen-diag}}\label{parabolic}

Now, let us explain how to compute the approximations $f^{n+1,j}_{i}$
 of the function $f$ on the arc $i$ at discretization point $x^{j}_{i}$ and time $t_{n+1}$ needed for computing  \eqref{scheme-uv-interval},  \eqref{bound3}, and  \eqref{bound4}. 
 Referring to system \eqref{hyper-gen-diag}, we have  $\ds f=\phi_{x} u$, where $\phi$ satisfies the parabolic equation   $\ds \phi_{t}-D\, \phi_{xx}=au-b\phi$ on each arc.
 Boundary conditions for $\phi$ are given by equations \eqref{Neumann-phi}  on the outer boundary and  \eqref{transmission-phi} at a node.

We  solve the parabolic equation, using a finite differences scheme in space and a Crank-Nicolson method in time, namely an explicit-implicit method in time. 

Therefore, we will have the following equation for $\phi^{n,j}_{i}, \, 1 \leq j \leq M_{i}$,
\begin{equation}\label{discr-CN}
\begin{aligned}
\phi^{n+1,j}_{i}&=\phi^{n,j}_{i}-\frac{D_{i}k}{2h_{i}^2} \left( -\phi^{n,j+1}_{i}+2\phi^{n,j}_{i}-\phi^{n,j-1}_{i}\right)-\frac{D_{i}k}{2h_{i}^2} \Bigl( -\phi^{n+1,j+1}_{i}
+2\phi^{n+1,j}_{i}
-\phi^{n+1,j-1}_{i}\Bigr)\\
&+\frac{a_{i}k}{2}(u^{n+1,j}_{i}+u^{n,j}_{i})-\frac{b_{i}k}{2}(\phi^{n+1,j}_{i}+\phi^{n,j}_{i}).
\end{aligned}
\end{equation}
Now, let us find the two boundary conditions needed on each interval. As in subsection \ref{AHONetwork}, the boundary conditions will be given in the case of an outer node  and in the case of an inner node.
On the outer boundary, condition  \eqref{Neumann-phi} for $\phi$ is discretized using a second order approximation, which is
 \begin{equation}\label{discr-outer}
\left\{\begin{aligned}
&\phi_{i}^{n,0}= \frac 4 3 \phi^{n,1}_{i}-\frac 1 3\phi^{n,2}_{i}, \, \textrm{ if } i\in I_{out}, \\
&\phi_{i}^{n,M_{i}+1}= \frac 4 3 \phi^{n,M_{i}}_{i}-\frac 1 3\phi^{n,M_{i}-1}_{i}, \, \textrm{ if } i\in O_{out}.
\end{aligned}\right.
\end{equation}
Let us now describe our numerical approximation for the transmission condition  \eqref{transmission-phi} which,  as the transmission condition  for the hyperbolic part  \eqref{transmission}, couples the $\phi$ functions of arcs having  a node in common.
 
Condition \eqref{transmission-phi} is discretized using the same second-order discretization formula as before, namely  we have
at  node $p$,
\begin{align*}
& \phi_{i}^{n,M_{i}+1}=\frac{4}{3}\phi_{i}^{n,M_{i}}-\frac{1}{3}\phi_{i}^{n,M_{i}-1}+\frac{2}{3}\frac{h_{i}}{D_{i}}\sum_{j \in I_{p}} \kappa_{i,j} (\phi_{j}^{n,M_{j}+1}-\phi_{i}^{n,M_{i}+1})
+\frac{2}{3}\frac{h_{i}}{D_{i}}\sum_{j \in O_{p}} \kappa_{i,j} (\phi_{j}^{n,0}-\phi_{i}^{n,M_{i}+1}) , 
 \textrm{ if }i\in I_{p}, \\
&\phi_{i}^{n,0}=\frac{4}{3}\phi_{i}^{n,1}-\frac{1}{3}\phi_{i}^{n,2}+\frac{2}{3}\frac{h_{i}}{D_{i}}\sum_{j \in I_{p}} \kappa_{i,j} (\phi_{j}^{n,M_{j}+1}-\phi_{i}^{n,0})
+\frac{2}{3}\frac{h_{i}}{D_{i}}\sum_{j \in O_{p}} \kappa_{i,j} (\phi_{j}^{n,0}-\phi_{i}^{n,0}) ,  
 \textrm{ if }i\in O_{p}.
\end{align*}
These relations can be rewritten as~:
\begin{equation}\label{discr-transmi}
\begin{aligned}
& \underbrace{\left(1+\frac{2}{3}\frac{h_{i}}{D_{i}} \sum_{j\in I_{p} {\cup} O_{p}} \kappa_{i,j} \right)}_{=\eta_{i}^p}\phi_{i}^{n,M_{i}+1}=\frac{4}{3}\phi_{i}^{n,M_{i}}-\frac{1}{3}\phi_{i}^{n,M_{i}-1} 
+\frac{2}{3}\frac{h_{i}}{D_{i}}\sum_{j \in I_{p}} \kappa_{i,j} \phi_{j}^{n,M_{j}+1}
+\frac{2}{3}\frac{h_{i}}{D_{i}}\sum_{j \in O_{p}} \kappa_{i,j} \phi_{j}^{n,0} ,  
  \textrm{ if }i\in I_{p}, 
\\
& \underbrace{\left(1+\frac{2}{3}\frac{h_{i}}{D_{i}} \sum_{j\in I_{p}  {\cup} O_{p}} \kappa_{i,j} \right)}_{=\eta_{i}^p}\phi_{i}^{n,0}=\frac{4}{3}\phi_{i}^{n,1}-\frac{1}{3}\phi_{i}^{n,2}
+\frac{2}{3}\frac{h_{i}}{D_{i}}\sum_{j \in I_{p}} \kappa_{i,j} \phi_{j}^{n,M_{j}+1}+\frac{2}{3}\frac{h_{i}}{D_{i}}\sum_{j \in O_{p}} \kappa_{i,j} \phi_{j}^{n,0}, 
\textrm{ if }i\in O_{p}.
\end{aligned}
\end{equation}
Let us remark that the previous discretizations are compatible with relations \eqref{discr-outer} considering that  for outer boundaries the coefficients $\kappa_{i,j}$ are null. Therefore, in this case, the value of $\eta_{i}^{out}$  is just equal to  $1$.
Since equations  \eqref{discr-transmi} are coupling the unknowns of all arcs altogether, we have to solve  a large system which contains  all the equations of type \eqref{discr-CN} and also  the discretizations of transmission conditions \eqref{discr-transmi}.

Once the values of $\ds \phi^{n+1,j}_{i}$ are known, we can compute a  second-order discretization of the derivatives of $\phi$  which gives the values of the $f$ function, namely :
\begin{equation*}
\phi_{x,i}^{n+1,j}=  \left\{\begin{aligned}
& \ds\frac{1}{2\,h_{i}} \left(\phi ^{n+1,j+1}_{i}-\phi ^{n+1,j-1}_{i}\right),  1 \leq j \leq M_{i},\\
& \ds\frac{1}{2\,h_{i}} \left( -\phi ^{n+1,2}_{i}+4\phi ^{n+1,1}_{i}-3\phi ^{n+1,0}_{i}\right), j=0,\\
& \ds\frac{1}{2\,h_{i}} \left(\phi ^{n+1,M_{i}-1}_{i}-4\phi ^{n+1,M_{i}}_{i}+3\phi ^{n+1,M_{i}+1}_{i}\right), j=M_{i}+1.
\end{aligned}\right.
\end{equation*}
The discretization of $f$ needed at equations \eqref{scheme-uv-interval},\eqref{bound3}, and	\eqref{bound4} is therefore given by $f^{n+1,j}_{i}=\phi_{x,i}^{n+1,j}  u^{n+1,j}_{i}$.

\section{Numerical tests}\label{sec4}

Here we present some numerical experiments  for  system (\ref{hyper-gen}) on networks, with the use of the methods introduced in Section \ref{sec3}, namely the second--order AHO scheme  for the hyperbolic part, 
complemented with the Crank-Nicolson scheme   for the parabolic part.
We start with a simple test for the  AHO scheme on the hyperbolic part of Section \ref{sec3} in the case of a simplified system, where $\phi_{x} $ is  equal to a constant $\alpha$ on each arc, for which  we know the exact  stationary states.

\subsection{Case   $\phi_{x}$ constant.}  For this example, we omit the equation for $\phi$ so that the system becomes
\begin{equation}
\label{hyper-simple-diag-2}
\left\{
\begin{aligned}
u^+_{t}+\lambda u^+_{x}&=\frac{1}{2\lambda}\left((\alpha-\lambda)u^++(\alpha+\lambda)u^-\right),\\
u^-_{t}-\lambda u^-_{x}&=-\frac{1}{2\lambda}\left((\alpha-\lambda)u^++(\alpha+\lambda)u^- \right).
\end{aligned}
\right.
\end{equation}
This system  is suitable to test the accuracy of the numerical approximation, since it is easy to compute its asymptotic stationary solutions.
We also rewrite the previous system \eqref{hyper-simple-diag-2} using the usual change of variables \eqref{cvar} which gives 
\begin{equation}\label{hyper-simple}
 \left\{\begin{array}{ll}
  u_t +v_x =0,\\ 
  v_t +\lambda^2 u_x 
   = \alpha\,u-v,
 \end{array}\right.
\end{equation}
with $\alpha$ a constant. To satisfy the subcharacteristic condition in \cite{Na96}, we also assume that
\begin{equation}\label{condition}
\lambda> |\alpha|.
\end{equation}
Let us explain how to find the stationary states in the case of the two-arcs network of Figure \ref{fig:rete}. The method can be easily generalized to more complex networks.
In that case, the stationary solutions satisfy the following equations on the intervals $I_{1}$ and $I_{2}$~:
\begin{equation*}
 \left\{\begin{array}{l}
  v_{i,x} =0,\\ 
  \lambda_{i}^2 u_{i,x}
   =\alpha_{i}\,u_{i}-v_{i},
 \end{array}\right.
\end{equation*}
that is to say
\begin{equation}\label{hyper-simple-stat}
 \left\{\begin{array}{l}
  v_{i}=constant,\\ 
  u_{i}=C_{i} \exp(\alpha_{i} x/\lambda_{i}^2)+v_{i}/\alpha_{i}.
 \end{array}\right.
\end{equation}
Since 
both  intervals are connected to the outer boundary, due to boundary condition \eqref{Neumann}, we have  $\ds v_{1}=v_{2}=0$. Therefore we obtain non constant solutions on each arc, given  by 
$\ds  u^{\pm}_{i}= \frac{u_{i}}{2}=\frac{C_{i}}{2} \exp(\alpha_{i} x/\lambda_{i}^2) $ and the constants $C_{i}$ are computed thanks to condition \eqref{transmission}. 
Remark that, in that case, we do not expect to have asymptotic states given by constant stationary solutions, since the only possible constant solution is the null one, which will  be unsuitable, due to the constraint of the conservation of mass.  
Set
\begin{equation}\label{cctil}
\widetilde C_{1}=\frac{C_{1}}{\lambda_{1}} \exp(\alpha_{1} L_{1}/\lambda_{1}^2), \ \widetilde C_{2}=\frac{C_{2}}{\lambda_{2}}.
\end{equation}
These constants solve the following system~:
\begin{equation}\label{sist}
\mathbf{M}\mathbf{\widetilde C}=
\left(\begin{array}{cc}
\lambda_{1}(\xi_{1,1}-1) & \lambda_{2} \xi_{1,2} \\
\lambda_{1}\xi_{2,1} & \lambda_{2}(\xi_{2,2} -1) 
\end{array}\right)
 \left(\begin{array}{c}
 \widetilde C_{1} \\
 \widetilde C_{2}  
 \end{array}\right)=0.
\end{equation}
According to  \eqref{condition_lambda},  $\ds \textrm{Ker } \mathbf{M}\neq \{ 0\}$, and so we have at most one equation and two unknowns.
Therefore, there exists at least one family of non  trivial stationary solutions to system \eqref{hyper-simple}  and exactly one family  when  $\ds \textrm{dim } \textrm{Ker } \mathbf{M}=1$. Remark that in the general  case of a single node with an arbitrary number of incoming and outcoming arcs, assuming that all coefficients $\xi_{i,j}$ are strictly positive -- or more generally, that the matrix formed by these coefficients is irreducible, which is somewhat meaningful in the biological context --, we can prove  that we have exactly $\ds \textrm{dim } \textrm{Ker } \mathbf{M}=1$, thanks to the classical Perron-Frobenius theorem.

In the case we are looking for an asymptotic state as a stationary state of the system, we can also take into account the conservation for mass. In that case, the  stationary state we compute should have the same mass as the initial datum. More precisely, according to equation 
$$\mu_0 = \sum_{i=1}^{2} \int_{0}^{L_i} C_i \exp{\left(\frac{\alpha x}{{\lambda_i}^2}\right)} 
dx  = \sum_{i=1}^{2} C_i \frac{{\lambda^2_i}}{\alpha} \left(\exp{\left(\frac{\alpha L_i}{{\lambda_i}^2}\right)} - 1\right),$$ 
we have that 
the free parameter is fixed by the mass conservation.

In particular we set $L_1=4$, $L_2=1$, $\alpha_i=\alpha=0.5$, $\lambda_1=2$, $\lambda_2=1$ and take the dissipative transmission coefficients $\xi_{1,1}=0.8, \, \xi_{2,1}=0.4,\,  \xi_{1,2}= 0.2, \, \xi_{2,2}=0.6$. If $\mu_0=250$, the system is solved by $\widetilde C_{1}\sim 28.13$ and $\widetilde C_{2}\sim 56.25$, so that the stationary solutions are $u_{1}=  C_1\exp(x/8)$ and $u_{2}= C_2 \exp(x/2)$, with $C_1\sim 34.12$ and $C_2 \sim 56.25$. 
The numerical simulations provide the asymptotic densities plotted in Fig. \ref{alpha05} and we notice a nice agreement with the stationary solutions computed analytically. Remark that densities are continuous at the node as explained in Section  \ref{SS-staz} for dissipative coefficients and vanishing fluxes.

\begin{figure}[htbp!]
\includegraphics[width=8cm,height=7cm]{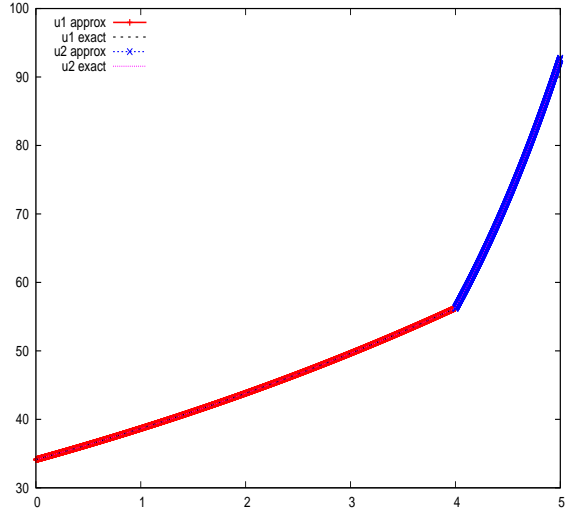}
\caption{Comparison between the densities of the exact and the numerical stationary solutions on arcs 1 and  2 obtained for $\lambda_1=2$, $\lambda_2=1$, $\alpha_i=\alpha=0.5$, initial mass $\mu_0=250$ distributed on the network as a symmetric perturbation of the value $50$, $L_1=4$, $ L_2=1$, dissipative coefficients $\xi_{1,1}=0.8, \, \xi_{2,1}=0.4, \, \xi_{1,2}= 0.2, \, \xi_{2,2}=0.6$ and time  $T=28$.}\label{alpha05}
\end{figure}

In Fig. \ref{fig:tab1} we present the log-log plot of the error in the $L^1$-norm computed using the  formula (\ref{error}) of Section 4, between the approximated and the asymptotic solutions to  system (\ref{hyper-simple}). The results in Fig. \ref{fig:tab1} show that the AHO approximation scheme provides the stationary solutions of the simplified hyperbolic model (\ref{hyper-simple}) with an accuracy of first order, and the error for the flux function $v$ tends clearly to zero, faster than for the function $u$.
\begin{figure}[htbp!]
\includegraphics[scale=0.5]{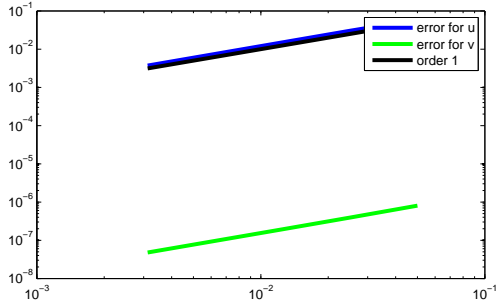} 
\caption{Log-log plot of the error in $L^1$ norm between the approximated and the the asymptotic solutions, as a function of the space step, for the solutions to system (\ref{hyper-simple}). The error is displayed in blue for  $u$, and in green for $v$. Initial data are distributed on the network as a symmetric perturbation of the value $50$. We used different space steps satisfying condition \eqref{cfl}, with $\lambda_1=2$, $\lambda_2=1$, $L_1=4, L_2=1$, $\mu_0= 250$, $T=100$.}
\label{fig:tab1}
\end{figure}

More examples and results showing the asymptotic behavior of solutions to the simple problem (\ref{hyper-simple}) on larger networks can be found in \cite{BNR11}, while some analytical results are given in \cite{GPhD}.

\subsection{Asymptotic solutions to the full system (\ref{hyper-gen-diag})}

Next, we deal with the full system (\ref{hyper-gen-diag}), which now include the chemotaxis equation. First, we consider again a network with only two arcs. We take the following data: the total mass $\mu_0=160$ distributed as a small perturbation of the value $20$ on two arcs of length $L_1=6$ and $L_2=2$, see Fig. \ref{indat}, $a_i=b_i=1$, $u_i(x,0)=\phi_i(x,0)$ and $v_i(x,0)=0$, $i=1,2$ and $\lambda_1=5, \lambda_2=4$. 
\begin{figure}
\includegraphics[width=5.5cm,height=5.5cm]{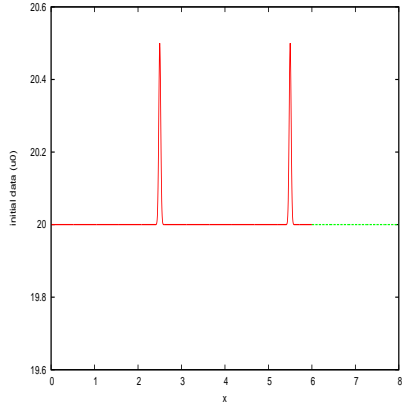}
\caption{Initial data corresponding to the total mass $\mu_0=160$. }\label{indat}
\end{figure}
In the next figures we represent the asymptotic stable solutions to system (\ref{hyper-gen-diag}) on the two-arcs network, produced by our  scheme. All the solutions are plotted at a time where the stationary state is already reached. In particular, in Fig. \ref{solcost} we plot a constant solution obtained using  the dissipative transmission coefficients of Section \ref{SS-dissip-trans}. In that case we can observe what was explained in Section \ref{SS-staz}, namely that in the case of two arcs and one node, there exist particular  dissipative transmission coefficients, such that the asymptotic stationary solutions are  constants on all the arcs. In Fig. \ref{solnoncost} we plot the more common case of   non-constant solutions,  obtained using different parameters and non dissipative coefficients. In both cases the limit flux function $v$ is equal to zero everywhere, since for the stationary solution the flux is constant, the flux on the external nodes is zero, and all the arcs are connected to external nodes.
\begin{figure}[htbp!]
\includegraphics[width=12cm,height=6.5cm]{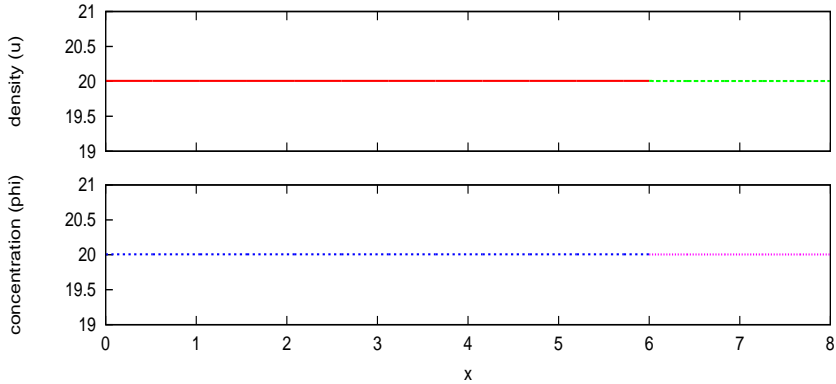}
\caption{Asymptotic solution for $\lambda_1=5, \lambda_2=4$, dissipative coefficients $\xi_{1,1}=0.8, \xi_{2,1}=0.25, \xi_{1,2}= 0.2, \xi_{2,2}=0.75$, $T=7.7$. }\label{solcost}
\end{figure}
\begin{figure}[htbp!]
\includegraphics[width=12cm,height=6.5cm]{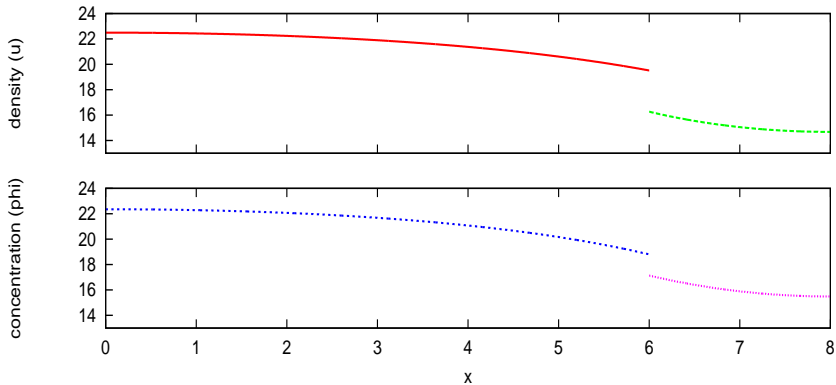}
\caption{Asymptotic solution at time $T=30$ for $\lambda_1=5$, $\lambda_2=4$, in case of non-dissipative coefficients $\xi_{1,1}=0.8, \xi_{2,1}=0.25, \xi_{1,2}= 0.24, \xi_{2,2}=0.7$.}\label{solnoncost}
\end{figure}

\begin{figure}[htbp!]
\begin{center}
\includegraphics[height=5cm,width=5cm]{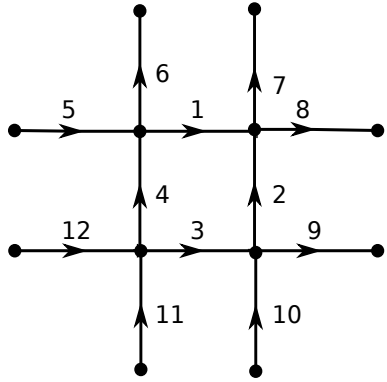}
\caption{A network composed of twelve arcs (six incoming and six outgoing)  connected by four internal nodes.}
\label{fig:12a4n}
\end{center}
\end{figure}

Let us now consider a larger network composed of twelve nodes and four arcs, see Fig. \ref{fig:12a4n}. We choose some non dissipative transmission coefficients, given in Table \ref{tableCoeff},  in order to satisfy condition (\ref{condition_lambda}). Let us consider as initial condition on the incoming arc 5,  the function plotted in Fig. \ref{indat5}, where we put a small symmetric perturbation of the constant state $u=110$.

\begin{table}[htb]
\centering
{
\begin{tabular}{|c|cccc|} \hline
&$\xi_{12,12}=0.1$,&
$\xi_{11,12}=0.3$,&
$\xi_{3,12}=0.3$,&
$\xi_{4,12}=0.3$, \\

 Node S-W &$\xi_{12,11}=0.2$,&
$\xi_{11,11}=0.2$,&
$\xi_{3,11}=0.3$,&
$\xi_{4,11}=0.3$, \\

&$\xi_{12,3}=0.2$,& 
$\xi_{11,3}=0.2$,&
$\xi_{3,3}=0.4$,&
$\xi_{4,3}=0.2$, \\

&$\xi_{12,4}=0.5$,&
$\xi_{11,4}=0.1$,&
$\xi_{3,4}=0.2$, &
$\xi_{4,4}=0.2$, \\  \hline

&$\xi_{3,3}=0.1$,&
$\xi_{10,3}=0.3$,&
$\xi_{9,3}=0.3$,&
$\xi_{2,3}=0.3$,\\

Node S-E &$\xi_{3,10}=0.2$,&
$\xi_{10,10}=0.2$,&
$\xi_{9,10}=0.3$,&
$\xi_{2,10}=0.3$,\\

&$\xi_{3,9}=0.2$,&
$\xi_{10,9}=0.2$,&
$\xi_{9,9}=0.4$,&
$\xi_{2,9}=0.2$,\\

&$\xi_{3,2}=0.5$,&
$\xi_{10,2}=0.1$,&
$\xi_{9,2}=0.2$,&
$\xi_{2,2}=0.2$,   \\ \hline

&$\xi_{1,1}=0.1$,&
$\xi_{2,1}=0.3$,&
$\xi_{8,1}=0.3$,&
$\xi_{7,1}=0.3$,\\

Node N-E &$\xi_{1,2}=0.2$,&
$\xi_{2,2}=0.2$,&
$\xi_{8,2}=0.3$,&
$\xi_{7,2}=0.3$,\\
  
&$\xi_{1,8}=0.2$,&
$\xi_{2,8}=0.2$,&
$\xi_{8,8}=0.4$,&
$\xi_{7,8}=0.2$,\\
 
&$\xi_{1,7}=0.5$,&
$\xi_{2,7}=0.1$,&
$\xi_{8,7}=0.2$,&
$\xi_{7,7}=0.2$,   \\ \hline

&$\xi_{5,5}=0.1$,&
$\xi_{4,5}=0.3$,&
$\xi_{1,5}=0.3$,&
$\xi_{6,5}=0.3$,\\

Node N-W &$\xi_{5,4}=0.2$,&
$\xi_{4,4}=0.2$,&
$\xi_{1,4}=0.3$,&
$\xi_{6,4}=0.3$,\\

&$\xi_{5,1}=0.2$,&
$\xi_{4,1}=0.2$,&
$\xi_{1,1}=0.4$,&
$\xi_{6,1}=0.2$,\\

&$\xi_{5,6}=0.5$,&
$\xi_{4,6}=0.1$,&
$\xi_{1,6}=0.2$,&
$\xi_{6,6}=0.2$.\\ \hline
\end{tabular}}
\vspace{0.1 in} 
\caption{Transmission coefficients used for the numerical simulations of Figures \ref{fig:12a4n_asympt} and \ref{fig:12a4n_flux} given node by node.}
\label{tableCoeff}
\end{table}

\begin{figure}
\includegraphics[width=5.5cm,height=5.5cm]{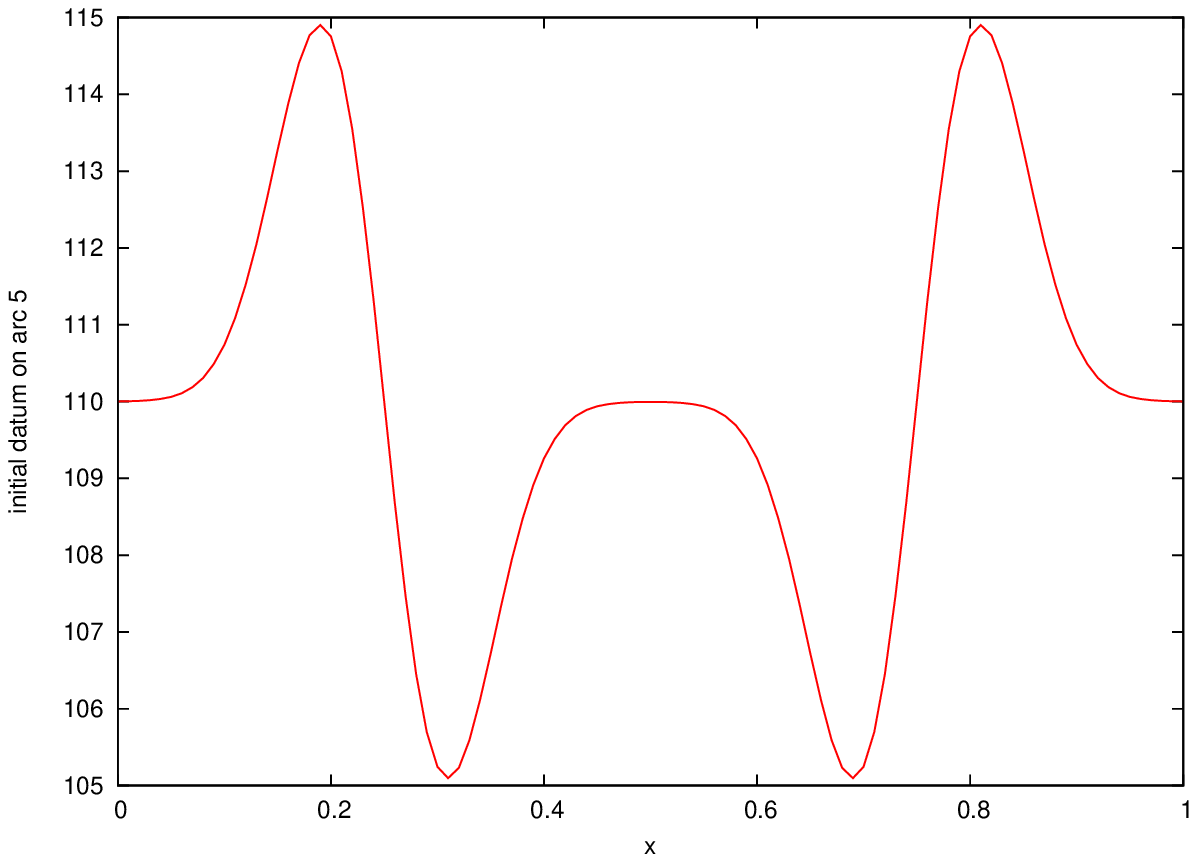}
\caption{Initial condition for $u$ and $\phi$ on arc 5 of the network presented in Fig. \ref{fig:12a4n}. }\label{indat5}
\end{figure}
In this case it is hard to compute analytically the stationary solutions. We only know that non-constant solutions are generally expected, according to the discussion in Section \ref{SS-staz}.
In Fig. \ref{fig:12a4n_asympt} we plot the asymptotic densities on the network node by node, starting from North-East and proceeding in a clockwise direction. Notice that most of the arcs are repeated in the different figures. In Fig. \ref{fig:12a4n_flux} the asymptotic fluxes are represented, and again our scheme is able to stabilize them correctly. We notice that the fluxes of arcs connected to outer boundaries vanish, whereas the fluxes of inner arcs, even if they are constant, are different from zero.

\begin{figure}[htbp!]
\centering
\includegraphics[height = 4.2 cm, width = 4.2 cm] {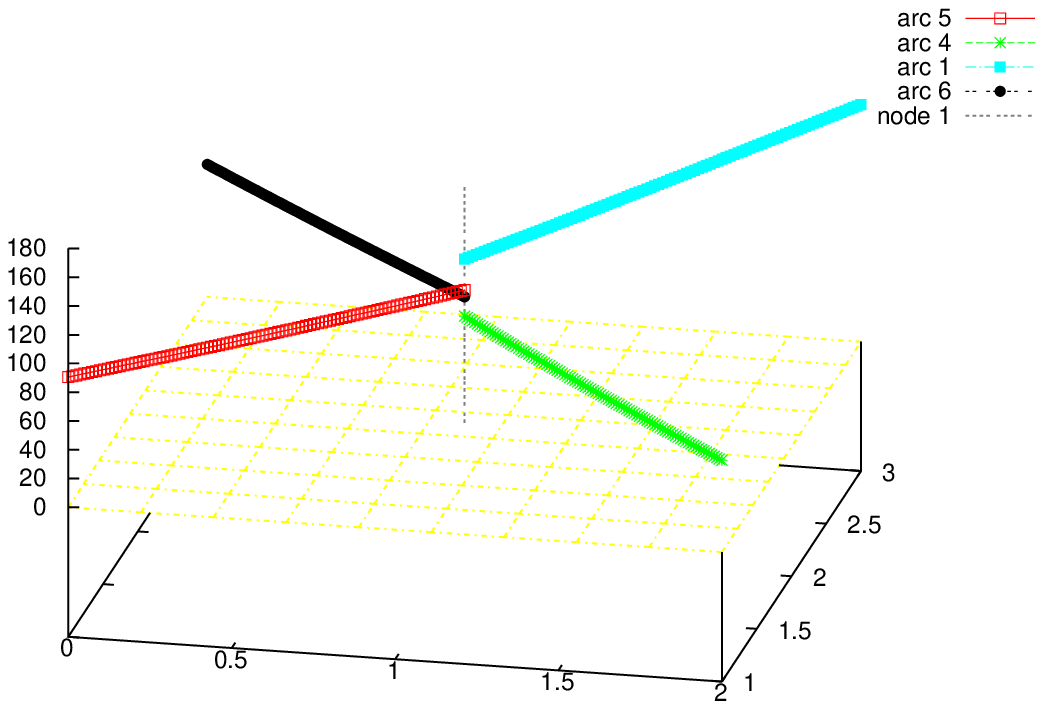}\qquad
\includegraphics[height = 4.2 cm, width = 4.2 cm] {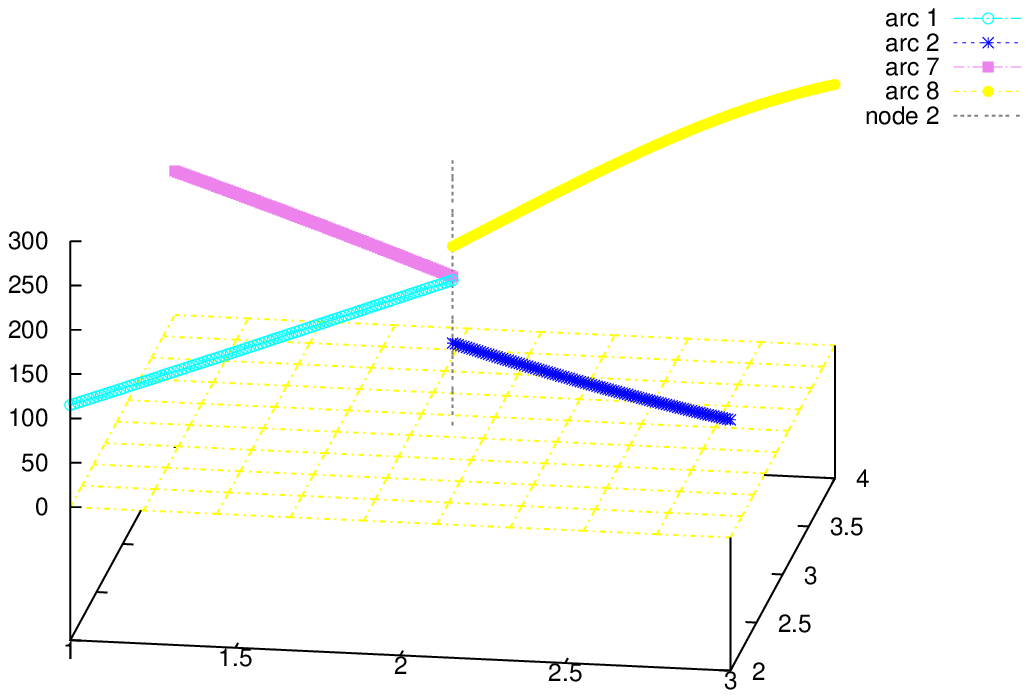}\\
\includegraphics[height = 4.2 cm, width = 4.2 cm] {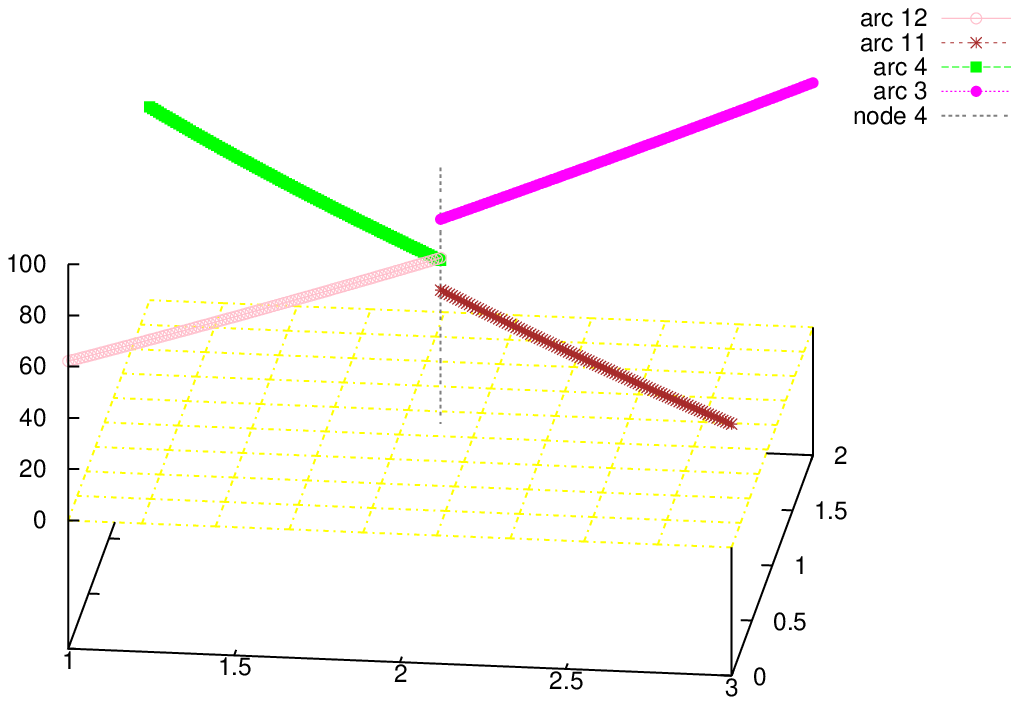}\quad
\includegraphics[height = 4.2 cm, width = 4.2 cm] {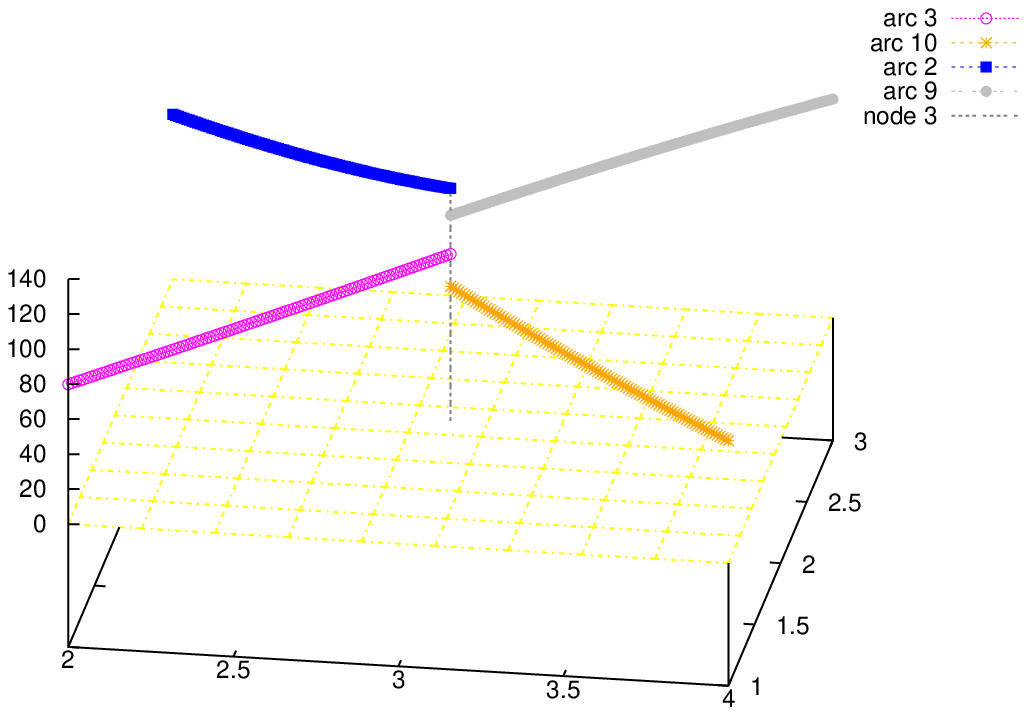}

\caption{Stationary solutions for the network composed of 12 arcs and 4 nodes of Fig. \ref{fig:12a4n}: the densities are computed at time $T= 30$, the values of the parameters are given by: $\lambda_i=\lambda=10, L_i=1, a_i=b_i=D_i=1$. The transmission coefficients can be found in Table \ref{tableCoeff}. The total initial mass $\mu_0= 1320$ is distributed as a perturbation of the constant state $110$ on arc 5 as in Fig. \ref{indat5} and as the constant density $110$ on the other arcs, with $h_i =h= 0.01, k=0.0005$.}\label{fig:12a4n_asympt}
\end{figure}

\begin{figure}[htbp!]
\begin{center}
\includegraphics[height=5.5cm,width=5.5cm]{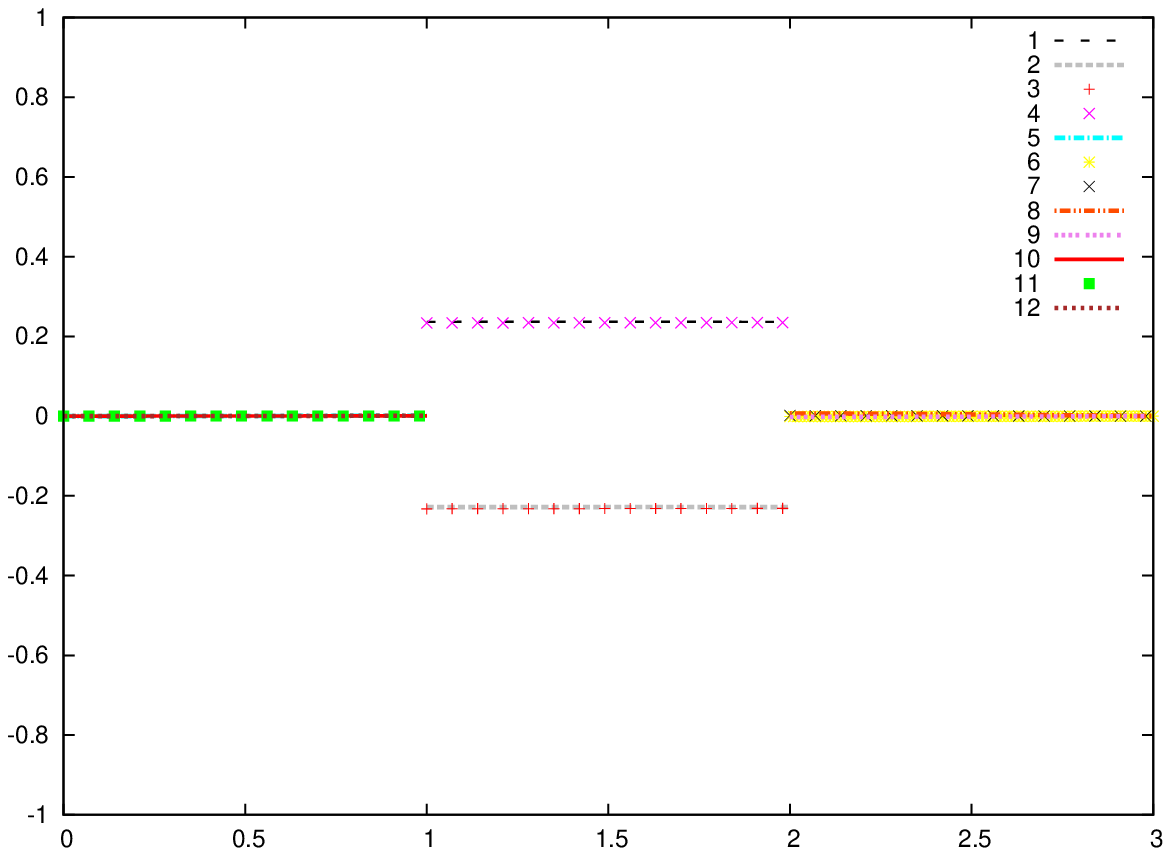}
\caption{The asymptotic fluxes of the arcs of the network composed of 12 arcs and 4 nodes at time $T=30$, with the same data as Fig. \ref{fig:12a4n_asympt}.}
\label{fig:12a4n_flux}
\end{center}
\end{figure}

\subsection{Instabilities: the appearance of numerical blow-up}

Let us consider some cases that present a strong asymptotical instability. Indeed, for some values of the parameters of the problem, namely of the arc's length $L$ and the cell velocity $\lambda$, in connection with the total mass  distributed on the arcs of the network, we can observe  increasing oscillations, which eventually may cause the blow-up of solutions.
It is important to notice that the blow-up can be already observed for this model even for a single arc, see Example 1 below, when the total mass $\mu_0$ is large  with respect to the characteristic parameters $L$ and $\lambda$. However, here the presence of more arcs, and so, a greater total length and total mass,   makes this kind of phenomenon much more frequent. 
 
\textbf{Example 1.}
Here we assume that we have only one interval with $L=1$ and $\lambda=10$ and we take, as initial condition for the density and the chemoattractant, a symmetric perturbation of a constant state  $C_0=9000$. The total mass is $\mu_0=9000$, as shown in  Figure \ref{datoex1}. The solution presents a clear blow-up at time $T=0.1$, see Fig. \ref{blowupex1}. This blow-up seems associated to non physical negative values of the density function $u$, and it is observed in the same way even for refined meshes (see Table 2 for the case of two arcs). This is not surprising, since the quasimonotonicity of the system, see again \cite{Na96}, is violated when the gradient $\phi_x$ is larger than $\lambda$.

\begin{figure}[htbp!]
\begin{center}
\includegraphics[height=5.5cm,width=5.5cm]{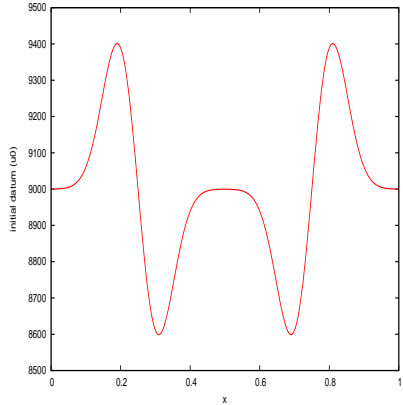}
\caption{The initial condition $u_0(x)$ is a symmetric perturbation of a constant state  $C_0=9000$, the total mass is $\mu_0=9000$.}
\label{datoex1}
\end{center}
\end{figure}

\begin{figure}[htbp!]
\begin{center}
\includegraphics[height=6cm,width=5.3cm]{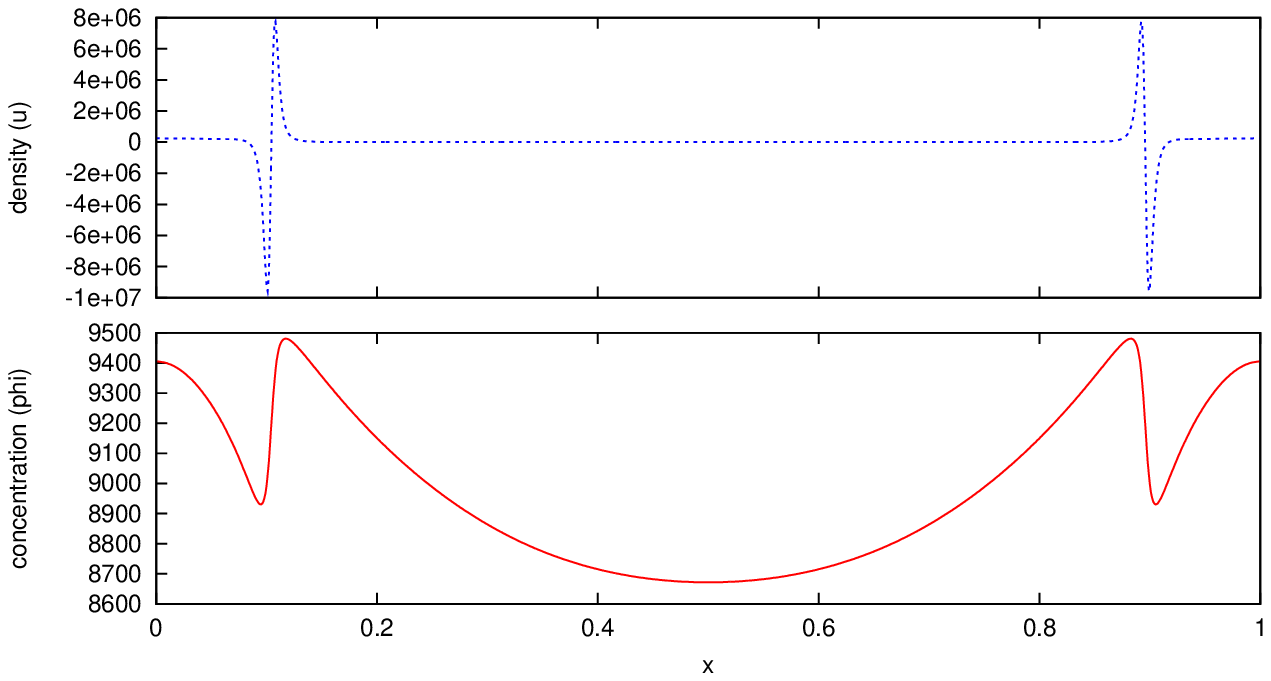}\quad \includegraphics[width=5cm,height=5.2cm]{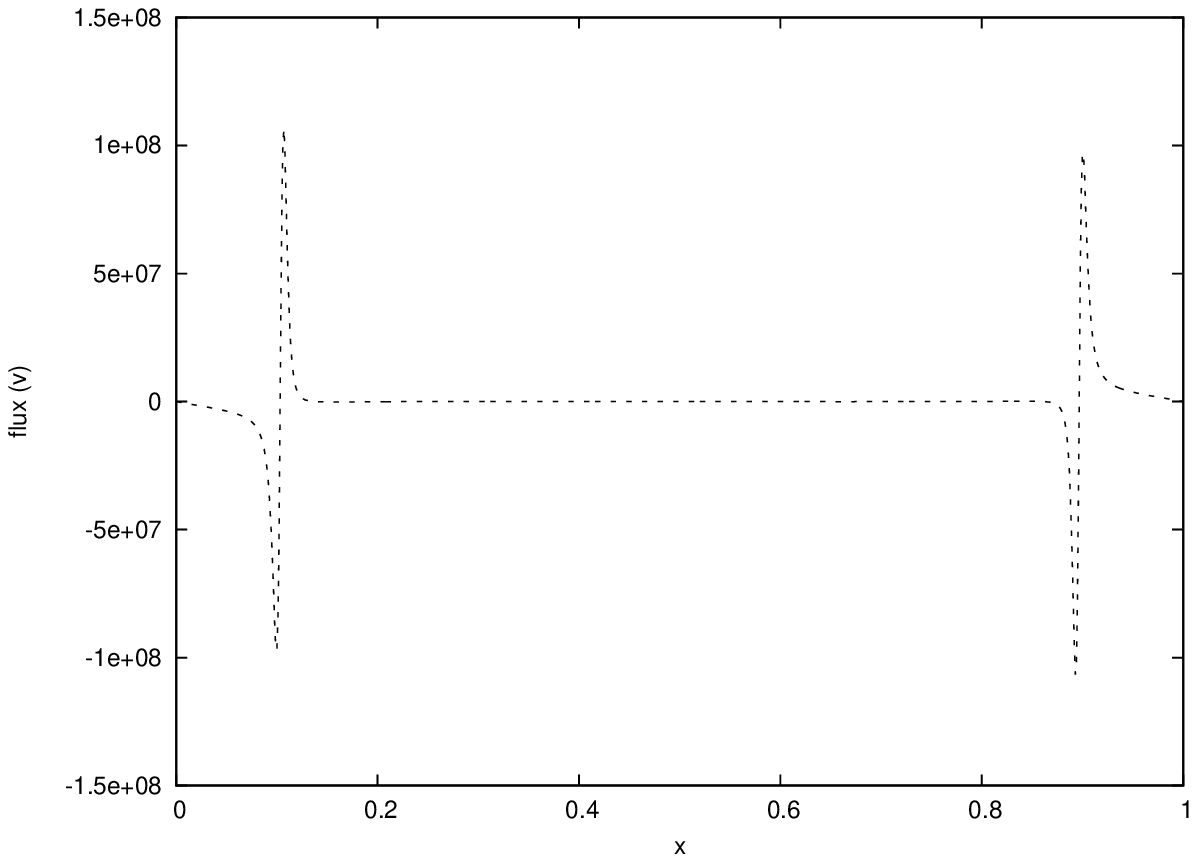}
\caption{Blow-up of the solution at time $T=0.1$, for data in Fig. \ref{datoex1} with $L=1$, $\lambda=10$, ${h} =0.001$, $\mu_0=9000$: on the left the blow-up density $u$ and the concentration $\phi$, on the right the flux $v$.}
\label{blowupex1}
\end{center}
\end{figure}


\textbf{Example 2.}
Here we take two arcs of length $L_1=6$ and $L_2=2$ and the initial density as in Fig. \ref{indat},  with $a_i=b_i=1$,  $u_i(x,0)=\phi_i(x,0)$, and $v_i(x,0)=0$, $i=1,2$. Then we change the values of velocities $\lambda_1$ and $\lambda_2$ in order to see how they  influence the behavior of solutions to system (\ref{hyper-gen}). At the junction we assume transmission and dissipative coefficients, taking  $\xi_{1,1}=0.96$ and then satisfying equations (\ref{dissip-coeff2})--(\ref{dissip-coeff22}).
What we observe  is that solutions blow up in finite time or not according to the relative values of $\lambda_1$ and $\lambda_2$, as it is shown in Figure \ref{regioni}. More precisely, we can observe three different regimes. If $\lambda_2$ is large with respect to $\frac{1}{\lambda_1-2}$, solutions stay bounded and converge to stationary solutions (green "x" in Figure \ref{regioni}). 
If $\lambda_1$ is small with $\lambda_2$ large enough, then solutions blow up in finite time (red "+" in Figure \ref{regioni}). Finally, there is a small region in between, $\lambda_1$ around the value 3 and $\lambda_2$ small enough, such that  solutions present a large  spike at the boundaries (marked by blue asterisks ``*'').
 
\begin{figure}
\begin{center}
\begin{psfrags} 
\psfrag{lambda1}{$\lambda_1$}\psfrag{lambda2}{$\lambda_2$}
\includegraphics[height=5cm]{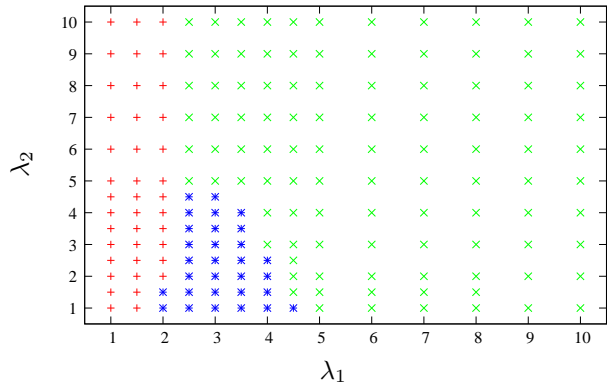} 
\end{psfrags} 
\caption{Schematization of the regions describing the behavior of solution for $\mu_0$ =160 and the velocities $\lambda_1$ and $\lambda_2$ varying: blow-up (marked by  red crosses ``+``), solutions with a spike at the boundaries (marked by blue asterisks ``*'') and stable stationary solutions (marked by  green ``x``).}\label{regioni}
\end{center}
\end{figure}
\vspace{0.2 in}

Let us now focus on the blow-up behavior. Referring to Fig. \ref{regioni}, we can choose a pair of velocities belonging to the blow-up region marked by red crosses ``+``, to say $\lambda_1=1$ and $\lambda_2=2$. The time step just before the numerical blow-up time of corresponding solutions, starting from initial data as in Fig. \ref{indat}, is plotted in Fig. \ref{blowupex2}. Even if apparently we are close to the transmission point, there are many grid points separating it from the blow-up point. To show that the blow-up is not just a numerical artifact, we perform the same simulation with the same data, but on refined grids. In Table \ref{table1} we report the blow-up time of solutions to system (\ref{hyper-gen}) for a fixed global mass $\mu_0$ when either the CFL condition $\nu = \frac{k}{h} \lambda$ or $h$ go to zero.
Out of the case of $\nu =1$, which appears to be more unstable, the blow-up time is independent of the  meshes and has to be considered to occur in the analytical solutions. 

\begin{figure} [htbp!]
\includegraphics[width=5.3cm,height=5.6cm]{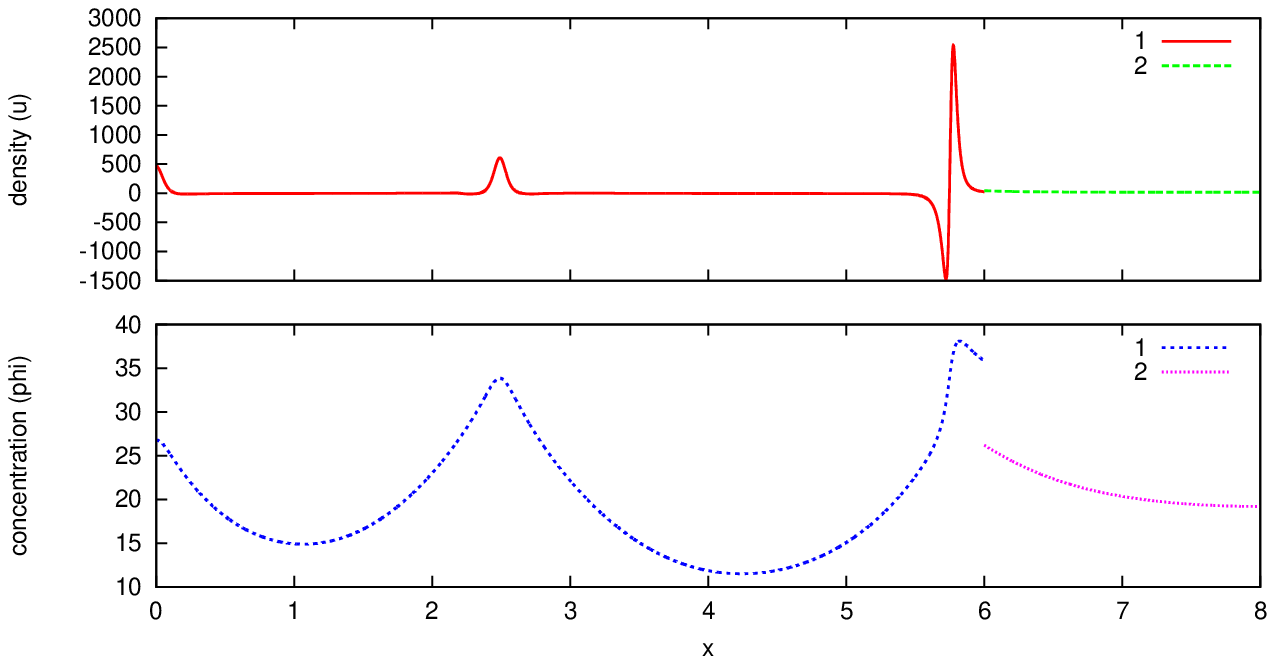}
\quad \includegraphics[width=5cm,height=5.5cm]{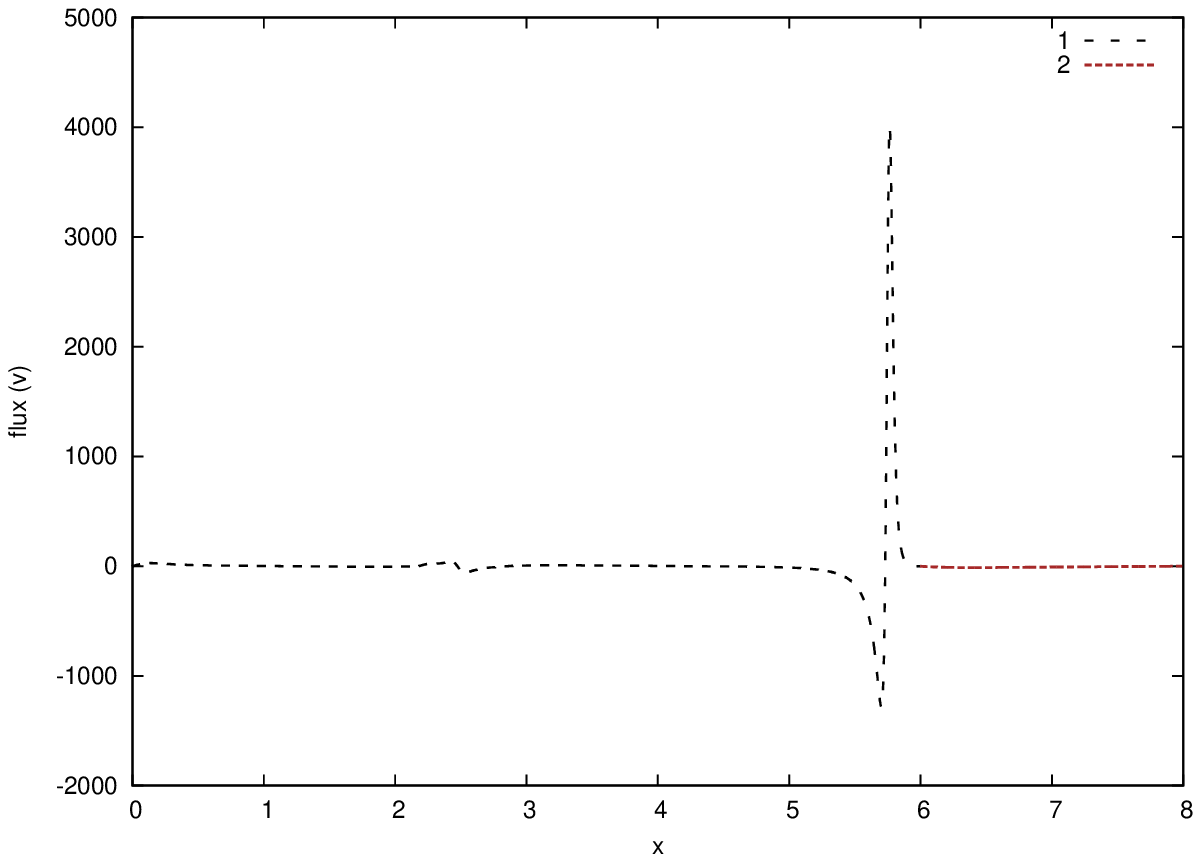}
\caption{Blow-up at time $T=4$, for initial  data as  in Fig. \ref{indat}, with $L_1=6, L_2=2$, $\lambda_1=1$ and $\lambda_2=2$, dissipative coefficients with $\xi_{1,1}=0.96$, the total mass is equal to $\mu_0=160$: on the left the density $u$ and the concentration $\phi$, on the right the flux $v$. The space steps are equal to ${h}_1= 0.001$, ${h}_2= 0.002$.}\label{blowupex2}
\end{figure}

\begin{table}[htbp!]
\centering
{
\renewcommand{\arraystretch}{1.2}
\begin{tabular}{|c|c|c|c|c|} \hline
\multicolumn{5}{|c|} {\rule[-3mm]{0mm}{8mm} Blow-up time} \\\hline\hline

$h$ & $\nu=1$ & $\nu=\frac{1}{2} \ \ \ \ $ & $\nu=\frac{1}{4} \ \ \ \ $ &$\nu=\frac{1}{8} \ \ \ \ $\\ \hline
$0.01$ &$2$  &  $4$ & $4$ & $4$\\
$0.0025$ & $1$  &  $4$ & $4$ & $4$ \\
$0.001$ & $0.5$  & $4$ &  $4$ & $4$ \\ \hline\hline

\end{tabular}}
\vspace{0.2 in}

\caption{Blow-up times of the solutions to system (\ref{hyper-gen}) when either the CFL condition $\nu = \frac{k}{h} \lambda$ or $h$ go to zero, with transmission coefficients of dissipative type, $L_1=6$, $L_2=2$, $\lambda_1=1$, $\lambda_2=2$, $\mu_0=160$.}
\label{table1}
\end{table}

\vspace{1cm}

\subsection{Comparisons and errors}

Let us now introduce the formal order of convergence of a numerical method $\gamma_w$ for the computation of the solution $w$ as the minimum among the orders on the arcs of the network:
\begin{equation}\label{order1}
\gamma_w = \min_{i}{\gamma_w^i},
\end{equation}
where
\begin{equation}\label{order2}
\gamma_w^i = \log_2\left(\frac{e^i({h}_i)}{e^i\left(\frac{{h}_i}{2}\right)}\right), \quad  i=1,\ldots,N.
\end{equation}
The $L^1$-error for the numerical solution on each arc is
\begin{equation}\label{error}
e^{i}\left(\frac{{h}_i}{n}\right) = \frac{{h}_i}{n} \sum_{l=0,..., nM_i}
  \left|w^T_l\left(\frac{{h}_i}{n}\right)
  -w^T_{2l}\left(\frac{{h}_i}{2n}\right)\right| \;\; n=1, 2,
\end{equation}
where $w^T_j(h)$ denotes the numerical solution obtained with the
space step discretization equal to $h$, computed in $x_j$ at the
final time $T$. The total $L^1$-error is
\begin{equation}
TOT_{err} = \sum_{i=1}^{N} e^{i}({h}_i).
\end{equation}

Table \ref{table2} shows the $L^1$-error (\ref{error}) on the asymptotic solutions $u$, $\phi$ and $v$ and order of convergence (\ref{order1}) of the approximation scheme applied to the considered network.
\begin{table}[htb]
\centering
{
\begin{tabular}{|c|c|c|c|c|c|c|} \hline
$h$ & $\gamma_u$& Error on $u$ & $\gamma_{\phi}$ &Error on $\phi$ &$\gamma_v$&Error on $v$\\ \hline
$0.025$&0.916393& 1.78849e-04 &0.965238 &1.78848e-04 &1.212334 &3.34559e-07\\ \hline
$0.0125$& 0.959614 &8.87206e-05 &0.982631 &8.87207e-05 &-0.058657 &1.44060e-07\\ \hline
$0.00625$&0.980243&4.41941e-05&0.990856&4.41954e-05&0.666605&1.49949e-07\\\hline
$0.003125$ &0.986317&2.20550e-05&0.992983&2.20651e-05&0.863690&9.43741e-08\\\hline
$0.0015625$ &0.937936&1.10172e-05&0.937109 &1.10280e-05&0.955806&5.17981e-08\\ \hline
\end{tabular}}
\vspace{0.1 in}
\caption{Orders and errors of the approximation scheme for the solutions to system (\ref{hyper-gen}), $L_i=1$, $\lambda_i=4, i=1,2$, $\mu_0=120.056$, $T=25$.}
\label{table2}
\end{table}

The results in Table \ref{table2} show the effectiveness of AHO approximation scheme in the solution of the transmission problem represented by the hyperbolic model (\ref{hyper-gen}). We notice indeed that even in this more general case the scheme still keeps a formal accuracy of first order, although  the interactions at the boundaries could deteriorate its accuracy.

\begin{acknowledgement}
The research leading to these results has received funding from the
European Union Seventh Framework Programme [FP7/2007-2013]  under
grant agreement n°257462 HYCON2 Network of excellence. This work has also been partially supported by the PRIN project 2008-2009 ``Equazioni iperboliche non lineari e fluidodinamica'' and by the ANR project  MONUMENTALG,  ANR-10-JCJC  0103.
\end{acknowledgement}


\end{document}